\newtheorem{theorem}{Theorem}[section]
\newtheorem{lemma}[theorem]{Lemma}
\theoremstyle{definition}
\newtheorem{definition}[theorem]{Definition}
\newtheorem{example}[theorem]{Example}
\newtheorem{prop}[theorem]{Proposition}
\newtheorem{cor}[theorem]{Corollary}
\theoremstyle{remark}
\newtheorem{remark}[theorem]{Remark}
\renewcommand{\dim}{\mbox{\rm dim}}
\newcommand{\GL}{\mbox{\rm GL}}
\newcommand{\SL}{\mbox{\rm SL}}
\newcommand{\Gal}{\mbox{\rm Gal}}
\newcommand{\MOD}{\mbox{\rm mod~}}
\newcommand{\GQ}{\Gal(\overline{\mathbb{Q}}/\mathbb{Q})}
\newcommand{\Frob}{\mbox{\rm Frob}}
\newcommand{\ord}{\mbox{\rm ord}}
\numberwithin{equation}{section}
\begin{document}\large

\title{On the computation of coefficients of modular forms: the reduction modulo p approach}

\author{Jinxiang Zeng and Linsheng Yin}
\address{Department of Mathematical Science, Tsinghua University, Beijing 100084, P. R. China}
\email{cengjx09@mails.tsinghua.edu.cn}


\subjclass[2012]{Primary 11F30, 11G20, 11Y16, 14Q05, 14H05}



\keywords{modular forms, Hecke algebra, modular curves, elliptic curves, Jacobian}

\begin{abstract}
In this paper, we present a probabilistic algorithm to compute the coefficients of modular forms of level one. Focusing on the Ramanujan's tau function, we give the explicit complexity of the algorithm. From
a practical viewpoint, the algorithm is particularly well suited for implementations.
\end{abstract}

\maketitle

\section{Introduction and Main Results}
In the book \cite{Edixhoven}, Couveignes, Edixhoven et el. described an algorithm for computing
coefficients of modular forms for the group SL$_2(\mathbb{Z})$, and Bruin \cite{Bruin} generalized
the method to modular forms for the congruence subgroups of the form $\Gamma_1(n)$.
Their methods lead to  polynomial time algorithms for computing coefficients of modular forms.
However, efficient ways to implement the algorithms and explicit complexity analysis are still being
studied. Working with complex number field, Bosman's explicit computations show the power of these new
methods, see \cite{Edixhoven}. As one of the applications, he largely improved the known result on
Lehmer's nonvanishing conjecture for Ramanujan's tau function.
For the recent progress in this direction see \cite{Mascot}.
Following Couveignes's idea \cite{Couveignes}, we give  a probabilistic algorithm,
which seems to be more suitable to deal with complexity analysis.
Instead of using Brill-Noether's algorithm, we work with the function field of the modular curve,
using He\ss's algorithm to make computations in the Jacobian of the modular curve.

We illustrate our method on the discriminant modular form, which is defined as
$$\Delta(q)=q\prod_{n=1}^{\infty}(1-q^n)^{24}=\sum_{n=1}^{\infty}\tau(n)q^n,$$
where $z \in \mathcal{H},q=e^{2\pi i z}$.

Let $p$ be a prime, using Deligne's bound we have $|\tau(p)|\le2p^{11/2}$, therefore to compute $\tau(p)$
it suffices to compute $\tau(p)\mod\ell$ for all primes $\ell$ bounded above by a constant in $\textrm{O}(\log p)$.
Let $\ell$ be a prime, the mod-$\ell$ Galois representation associated to $\Delta(q)$ is denoted as
$$\rho_\ell:\GQ\to \GL_2(\mathbb{F}_\ell),$$
which satisfying that for any prime $p\not=\ell$, the characteristic polynomial of the Frobenius
endomorphism $\textrm{Frob}_p$ is $x^2-\tau(p)x+p^{11}\mod \ell$. Therefore,
to compute $\tau(p)\mod\ell$ it suffices to compute the Galois representation $\rho_\ell$.
It is well known that $\rho_\ell$ can be realized by the subspace $V_\ell$ in the $\ell$-torsions
 $J_1(\ell)(\overline{\mathbb{Q}})[\ell]$ of the Jacobian variety $J_1(\ell)$ of the
 modular curve $X_1(\ell)$, which can be written as a finite intersection
$$V_\ell=\bigcap_{1\le i\le (\ell^2-1)/6} \ker(T_i-\tau(i),J_1(\ell)(\overline{\mathbb{Q}})[\ell]),$$
where $T_i,i=1 ,\ldots,\frac{\ell^2-1}{6}$ are Hecke operators.
Indeed $V_\ell$ is a group scheme over $\mathbb{Q}$ of order $\ell^2$,
which is called the Ramanujan subspace \cite{Couveignes}. As showed in \cite{Edixhoven},
the heights of the elements of $V_\ell$ are well bounded, which enables us to know $V_\ell$ explicitly.
More precisely, we have function $\iota:V_\ell\to \mathbb{A}_{\mathbb{Q}}^1$,
such that the heights of the coefficients of
$P(X):=\prod_{\alpha\in V_\ell\setminus\textrm{O}}(X-\iota(\alpha))$ are bounded above by
$\textrm{O}(\ell^\delta)$, where the constant $\delta$ is independent of $\ell$ and the function
$\iota$ is constructed explicitly in \cite{Bruin}.

Our approach is to compute $V_\ell\mod p$  for sufficiently many auxiliary small primes $p$ as
in Schoof's algorithm, and then reconstruct $V_\ell$ by the Chinese Remainder Theorem.
The main results of the paper are as follows.

\begin{theorem}\label{theorem:complexity}Let $\ell\ge 13$ be a prime and $p$ an $s$-good prime.
Given the Zeta function of the modular curve $X_1(\ell)_{\mathbb{F}_p}$, then $V_\ell \mod p$
can be computed in time $\textrm{O}(\ell^{4+2\omega+\epsilon}\log^{1+\epsilon}p\cdot(\ell+\log p))$.
\end{theorem}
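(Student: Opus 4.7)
The plan is to perform all computations in the Jacobian $J_1(\ell)$ of the reduction $X_1(\ell)_{\mathbb{F}_p}$: first produce the ambient $\ell$-torsion, then cut it down to $V_\ell\bmod p$ by intersecting the kernels of the Hecke operators $T_i-\tau(i)$. Arithmetic on the Jacobian will be carried out via He\ss's algorithm in the function field of $X_1(\ell)_{\mathbb{F}_p}$, which represents divisor classes by fractional ideals and performs addition, inversion, and reduction in time polynomial in the genus $g=O(\ell^2)$ and in $\log p$. The $s$-good hypothesis on $p$ is used to ensure that the field of definition of the relevant $\ell$-torsion is a small-degree extension $\mathbb{F}_{p^k}$, and that the $\ell$-part of $|J_1(\ell)(\mathbb{F}_{p^k})|$, which can be read off from the given Zeta function, has controlled exponent.

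Concretely, I would proceed in three steps. First, write $|J_1(\ell)(\mathbb{F}_{p^k})|=\ell^a m$ with $\gcd(\ell,m)=1$, draw random effective divisors of suitable degree, reduce them via He\ss\ to obtain random divisor classes, and multiply by $m$ followed by appropriate powers of $\ell$ to land in $J_1(\ell)[\ell]$. A standard randomness argument shows that $O(g)=O(\ell^2)$ samples suffice to span the $\mathbb{F}_\ell$-subspace containing $V_\ell$. Second, apply each Hecke operator $T_i$ for $i=1,\ldots,(\ell^2-1)/6$ by the usual recipe of pulling back along one projection of the modular correspondence, pushing forward along the other, and He\ss-reducing the result. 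Third, read off $V_\ell\bmod p$ as the joint kernel of the matrices of $T_i-\tau(i)$ acting on the spanning set; this last step is linear algebra over $\mathbb{F}_\ell$ in dimension $O(\ell^2)$ and costs $O(\ell^{2\omega+\epsilon})$, dominated by the Hecke step.

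The main obstacle, and the source of the exponents in the statement, is the combined complexity bookkeeping. The factor $\ell^{4+2\omega+\epsilon}$ accounts for the $O(\ell^2)$ Hecke operators acting on $O(\ell^2)$ spanning classes, with each application costing one He\ss\ reduction of complexity $\ell^{2\omega+\epsilon}$ on a curve of genus $O(\ell^2)$. The factor $\log^{1+\epsilon}p$ reflects fast arithmetic in $\mathbb{F}_{p^k}$, with $k$ bounded by the $s$-good hypothesis. The extra factor $(\ell+\log p)$ absorbs the cost of the two scalar multiplications needed to isolate the exact $\ell$-torsion: multiplying by $m$ contributes the $\log p$ summand, and iterating the $\ell$-multiplication map to strip off the $\ell^a$-part contributes the $\ell$ summand via the $s$-good bound on $a$. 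Rigorously verifying that He\ss's reduction on our explicit plane model of $X_1(\ell)$ runs within the claimed $\ell^{2\omega+\epsilon}$ per operation, and that the Hecke correspondences can be represented compactly enough not to inflate this, is the principal technical work that the rest of the paper will have to carry out.
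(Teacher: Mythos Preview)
Your outline (He\ss\ arithmetic in the Jacobian, isolate $\ell$-torsion, cut down via Hecke operators) matches the paper's, but the bookkeeping is wrong and the approach as written does not reach the stated bound. First, $s$-good only means $d_p<\ell$, so arithmetic in $\mathbb{F}_{p^{d_p}}$ costs $O((\ell\log p)^{1+\epsilon})$ per multiplication, not $O(\log^{1+\epsilon}p)$; and $s$-good says nothing about the $\ell$-valuation $a$ of $|J_1(\ell)(\mathbb{F}_q)|$, so your attribution of the $\ell$ summand in $(\ell+\log p)$ to ``stripping off the $\ell^a$-part'' is incorrect. Second, drawing $O(\ell^2)$ sample classes and applying each of the $O(\ell^2)$ Hecke operators $T_i$, $i\le(\ell^2-1)/6$, is far more expensive than you claim: the torsion step alone, repeated on $O(\ell^2)$ samples, already costs $O(\ell^{6+2\omega+\epsilon}\log^{2+\epsilon}p)$, and applying and reducing $T_n$ with $n$ as large as $\ell^2/6$ is not one He\ss\ reduction but roughly $n$ of them.

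The paper's route differs on both points. It takes a \emph{single} random $\ell$-torsion point (this is the $O(\ell^{4+2\omega+\epsilon}\log^{2+\epsilon}p)$ term, and the true source of the $\log p$ summand) and projects it directly onto $V_\ell$ by the operator $\pi_{\mathcal S}=\prod_{n\in\mathcal S}\tilde\pi_n$, where each $\tilde\pi_n$ is a polynomial of degree $<g$ in a single $T_n$, read off from the mod-$\ell$ factorization of the characteristic polynomial of $T_n$ on $S_2(\Gamma_1(\ell))$; no spanning set or linear algebra over $\mathbb{F}_\ell$ is used. The missing ingredient is Proposition~\ref{propositon:linearlyinl}: the maximal ideal $\mathfrak m=(\ell,\,T_n-\tau(n):n\ge1)$ is already generated by $\ell$ and the $T_n-\tau(n)$ with $n\le\lceil(2\ell+1)/12\rceil$, so one may take $|\mathcal S|=O(\ell)$ with every index $n=O(\ell)$. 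This is exactly what produces the $\ell$ summand in $(\ell+\log p)$; the paper remarks explicitly that without this proposition the bound degrades to $O(\ell^{4+2\omega+\epsilon}\log^{1+\epsilon}p\cdot(\ell^3+\log p))$.
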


\begin{cor}\label{cor:complexity}(1)The Ramanujan subspace $V_\ell$ can be computed in time $\textrm{O}(\ell^{5+2\omega+\delta+\epsilon})$.

(2) For prime $p$, $\tau(p)$ can be computed in time $\textrm{O}(\log^{6+2\omega+\delta+\epsilon} p)$.
\end{cor}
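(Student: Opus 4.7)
The plan is to combine Theorem~\ref{theorem:complexity} with Chinese Remainder reconstruction for part~(1), and then to apply part~(1) across all small auxiliary primes $\ell$, in the style of Schoof's algorithm, for part~(2).

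\medskip

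For part~(1), I would begin from the height bound recalled in the introduction: the coefficients of $P(X)=\prod_{\alpha\in V_\ell\setminus\OL}(X-\iota(\alpha))$ have absolute logarithmic heights of size $\textrm{O}(\ell^\delta)$, so after clearing a controlled denominator they are integers of absolute value at most $\exp(C\ell^\delta)$. Thus it suffices to compute $V_\ell \bmod p$ for a collection of distinct $s$-good primes whose product exceeds $\exp(2C\ell^\delta)$. Choosing $p$ from a window of size $\textrm{O}(\ell^{c})$ for a suitable constant $c$, a positive proportion of primes in this window are $s$-good, so I can select $N=\textrm{O}(\ell^{\delta}/\log\ell)$ such primes with $\log p=\textrm{O}(\log\ell)$. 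Theorem~\ref{theorem:complexity} then costs $\textrm{O}(\ell^{4+2\omega+\epsilon}\log^{1+\epsilon}p\cdot(\ell+\log p))=\textrm{O}(\ell^{5+2\omega+\epsilon})$ per prime, yielding a total of $\textrm{O}(\ell^{5+2\omega+\delta+\epsilon})$. The final CRT reconstruction runs in softly linear time in the output bit-size and is absorbed into the error exponent $\epsilon$.

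\medskip

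For part~(2), Deligne's bound $|\tau(p)|\le 2p^{11/2}$ implies that $\tau(p)$ is determined by its residues modulo all primes $\ell\le A\log p$ for some absolute constant $A$, as already observed in the introduction. There are $\textrm{O}(\log p/\log\log p)$ such $\ell$, each of size $\textrm{O}(\log p)$. For each I apply part~(1) to compute $V_\ell$ in time $\textrm{O}(\log^{5+2\omega+\delta+\epsilon}p)$; once $V_\ell$ is known as a subscheme of $\mathbb{A}^1_{\mathbb{Q}}$ via the polynomial $P(X)$, the characteristic polynomial of $\Frob_p$ on $V_\ell$, and in particular $\tau(p)\bmod \ell$, is read off by reducing $P(X)$ modulo a suitable auxiliary prime and identifying the orbit structure of $\Frob_p$ on its roots, at negligible additional cost. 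Combining the residues by CRT yields $\tau(p)$ and gives the bound $\textrm{O}(\log^{6+2\omega+\delta+\epsilon}p)$.

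\medskip

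The main technical obstacle will be to confirm that $s$-good primes are sufficiently dense in short windows, so that the prime-selection step does not dominate, and to check that the auxiliary inputs required by Theorem~\ref{theorem:complexity}---most notably the zeta function of $X_1(\ell)_{\mathbb{F}_p}$---can be produced within the same asymptotic budget. These tasks should reduce to standard point-counting and polynomial-arithmetic routines, but the bookkeeping must be tight enough that their cost is genuinely absorbed into the exponents already appearing in the stated bound.
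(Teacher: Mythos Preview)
Your argument for part~(1) is essentially the paper's: both use primes $p$ of size polynomial in $\ell$, invoke Theorem~\ref{theorem:complexity} at cost $\textrm{O}(\ell^{5+2\omega+\epsilon})$ per prime, and sum over $\textrm{O}(\ell^{\delta}/\log\ell)$ primes to reconstruct $P(X)$ by CRT.

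For part~(2), however, there is a genuine gap in the step ``read off $\tau(p)\bmod\ell$ from the orbit structure of $\Frob_p$ on the roots of $P(X)$.'' The factorisation pattern of $P(X)\bmod p$ only records the cycle type of $\Frob_p$ acting on $V_\ell\setminus\{0\}$, and this does \emph{not} determine the conjugacy class of $\Frob_p$ in $\GL_2(\mathbb{F}_\ell)$, let alone its trace. Indeed, for any $M\in\GL_2(\mathbb{F}_\ell)$ the matrices $M$ and $M^{-1}$ have identical orbit decompositions on $\mathbb{F}_\ell^2\setminus\{0\}$, yet $\Tr(M^{-1})=\Tr(M)/\det(M)$ generically differs from $\Tr(M)$. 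Concretely, over $\mathbb{F}_7$ the matrices $\mathrm{diag}(3,3)$ and $\mathrm{diag}(5,5)$ both yield eight orbits of length~$6$ on $\mathbb{F}_7^2\setminus\{0\}$, but have traces $6$ and $3$ respectively.

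The paper resolves this via the Dokchitser--Dokchitser method (Section~4.3): one precomputes, for every conjugacy class $C\subset\GL_2(\mathbb{F}_\ell)$, a resolvent polynomial $\Gamma_C(X)\in\mathbb{Q}[X]$ built from the roots of $P(X)$, and then identifies $\Frob_p$ by testing which $\Gamma_C$ vanishes at the trace $\Tr_{(\mathbb{F}_p[x]/P(x))/\mathbb{F}_p}(h(x)x^p)$. The cost of computing all $\Gamma_C(X)$ is $\textrm{O}(\ell^{8+\delta+\epsilon})$ and the trace evaluation is $\textrm{O}(\ell^{4.752}\log^{2+\epsilon}p)$; since $\omega\ge 2$, both are dominated by $\textrm{O}(\ell^{5+2\omega+\delta+\epsilon})$, so your final bound survives---but your proposed mechanism for this step does not.
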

\begin{remark}See \ref{s-goodprime} for the definition of $s$-good prime. The constant $\omega$ refers to that,
the complexity of a single group operation in the Jacobian variety $J_1(\ell)$ is in $\textrm{O}(g^\omega)$,
where $g$ is the genus of the modular curve $X_1(\ell)$. Using Khuri-Makdisi's algorithm, the constant $\omega$ can be $2.376$.
Using He\ss's algorithm, $\omega$ is known to be in $[2,4]$. The constant $\delta$ is bigger than $\dim V_\ell=2$.
\end{remark}

\begin{theorem}\label{theorem:main2} The nonvanishing of $\tau(n)$ holds for all $n$ such that
$$n<982149821766199295999\approx9\cdot10^{20}.$$
\end{theorem}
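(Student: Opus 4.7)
The plan is to reduce the nonvanishing of $\tau(n)$ to a finite check on primes and to combine this check with the algorithm of Theorem~\ref{theorem:complexity}. By multiplicativity, $\tau(n)=0$ if and only if $\tau(p^k)=0$ for some prime power $p^k$ exactly dividing $n$. The Hecke recursion $\tau(p^{k+1})=\tau(p)\tau(p^k)-p^{11}\tau(p^{k-1})$ gives the closed form $\tau(p^k)=(\alpha^{k+1}-\beta^{k+1})/(\alpha-\beta)$, where $\alpha,\beta$ are the roots of $x^2-\tau(p)x+p^{11}$; from $|\alpha|=|\beta|=p^{11/2}$ one deduces that $\tau(p^k)=0$ with $k\ge 2$ forces $p^k$ to lie well below the stated bound, and such $p^k$ can be excluded by direct enumeration. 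It thus suffices to show $\tau(p)\ne 0$ for every prime $p<9\cdot 10^{20}$.

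For the prime case, we start from the classical congruences due to Ramanujan, Wilton, Serre and Swinnerton--Dyer, which express $\tau(p)\bmod M_0$ as an explicit function of $p\bmod M_0$, with $M_0=2^{11}\cdot 3^7\cdot 5^3\cdot 7\cdot 23\cdot 691$. Requiring $M_0\mid\tau(p)$ pins $p$ down to a small set $S_0$ of residue classes modulo $M_0$, to which every prime $p$ with $\tau(p)=0$ must belong. To shrink this set we invoke Theorem~\ref{theorem:complexity} to compute the Ramanujan subspaces $V_\ell$ for additional primes $\ell\in\{11,13,17,19,23,\ldots\}$; each $V_\ell$ determines the Galois representation $\rho_\ell$ and hence the value of $\tau(p)\bmod\ell$ in terms of the conjugacy class of $\Frob_p$ acting on $V_\ell$. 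Imposing $\ell\mid\tau(p)$ for each such $\ell$ thins $S_0$ further, and we pick enough auxiliary $\ell$ so that enumerating the surviving residue classes of $p<9\cdot 10^{20}$ produces no candidate prime.

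The main obstacle is computational: the cost of producing $V_\ell$ grows rapidly with $\ell$, since the genus of $X_1(\ell)$ is of order $\ell^2/24$ and each group operation in $J_1(\ell)$ costs $\textrm{O}(g^\omega)$, so one must balance the choice of $\ell$ against the density of primes surviving in the residue classes cut out by the existing congruences. In addition, for each chosen $\ell$ one must select sufficiently many $s$-good auxiliary primes $p$ to reconstruct $V_\ell$ over $\mathbb{Q}$ via the Chinese Remainder Theorem, and certify that the reconstructed object is correct by checking consistency with the expected Hecke action on $\Delta$. Once all required $V_\ell$ are in hand, the final enumeration of candidate primes up to $9\cdot 10^{20}$ is straightforward.
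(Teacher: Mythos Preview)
Your overall strategy matches the paper's: combine the exceptional--prime congruences (``Serre's criteria'') with the mod--$\ell$ Galois representations for a few non--exceptional $\ell$, then locate the smallest prime $p$ surviving all the conditions. Two points, however, need correction.

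First, the prime--power reduction is misargued. From $|\alpha|=|\beta|=p^{11/2}$ alone you cannot deduce any bound on $p^k$; this equality holds for every $p$. The actual argument is that $\tau(p^k)=0$ forces $\zeta:=\alpha/\beta$ to be a root of unity, and since $\zeta+\zeta^{-1}=(\tau(p)^2-2p^{11})/p^{11}\in\mathbb{Q}$ the order of $\zeta$ lies in $\{1,2,3,4,6\}$. Orders $3,4,6$ give $\tau(p)^2\in\{p^{11},2p^{11},3p^{11}\}$, all impossible, and order $1$ gives $\tau(p)^2=4p^{11}$, also impossible; hence $\zeta=-1$ and $\tau(p)=0$. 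So the least $n$ with $\tau(n)=0$ is automatically prime --- there is nothing to ``enumerate'' for $k\ge 2$.

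Second, this is a computational theorem, so a plan is not a proof: you must commit to the specific $\ell$ and report the outcome. The paper uses $\ell\in\{11,13,17,19,31\}$; the cases $\ell\le 19$ are Bosman's, and the new input is $\ell=31$, carried out not on $X_1(31)$ but on the intermediate curve $X_H(31)$ (available because $31\equiv 1\pmod{10}$), whose Jacobian has dimension $6$ rather than $26$. Your list ``$\ell\in\{11,13,17,19,23,\ldots\}$'' is off: $23$ is exceptional and already sits inside $M_0$, while $29$ is skipped in favour of $31$ for exactly this dimension reason. Finally, the paper does not work with the full $V_\ell$ here but with the degree--$(\ell+1)$ projective polynomial $Q_\ell(X)$, using the criterion that $\tau(p)\equiv 0\pmod\ell$ iff $Q_\ell(X)\bmod p$ has an irreducible quadratic factor. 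The search then yields $982149821766199295999$ as the first prime passing Serre's criteria together with $\tau(p)\equiv 0\pmod{11\cdot 13\cdot 17\cdot 19\cdot 31}$, which is the stated bound.
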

\begin{remark}In \cite{Bosman} the nonvanishing of $\tau(n)$ was verified for all $n$ such that
$$n < 22798241520242687999 \approx 2 \cdot10^{19}.$$
\end{remark}

{\bf Notation}: The running time will always be measured in bit operations. Using FFT, multiplication of two $n$-bit
length integers can be done in $\textrm{O}(n^{1+\epsilon})$ time. Multiplication in finite field $\mathbb{F}_q$ can be done
in $\textrm{O}(\log^{1+\epsilon} q)$.

The paper is organized as follows.
In Section 2 we provide some necessary background on computing a convenient plane model and the function field of the modular curve,
results for computing isogenies of elliptic curve over finite fields are also recalled. A better bound on the generators of the maximal
ideal of Hecke algebra is proved, which is used to reduce the complexity of the algorithm.

Section 3 contains the application of He\ss's algorithm to the computation in the Jacobian of the modular curve over finite fields.
Here, we introduce methods to find the correspondence between the places of the function field and the cusps of the modular curve,
and compute the action of Hecke operators on places of the function field.

The main algorithm is given in Section 4, including complexity analysis.

Section 5 is concerned with some real computations of the Ramanujan's tau function.
All of our computations are based on Magma computational algebra system \cite{Bosma}.

\section{Function field of modular curves}
In this section we study the plane model and the function field of the modular curve $X_1(\ell)$.

Let $\Gamma_1(\ell)$ be a congruence subgroup of SL$_2(\mathbb{Z})$, defined as
$$\Gamma_1(\ell)=\left\{ \left[\begin{matrix} a & b\\ c & d \end{matrix}\right]\in \textrm{SL}_2(\mathbb{Z}):
\left[\begin{matrix} a & b\\ c & d \end{matrix}\right]\equiv
\left[\begin{matrix} 1 & *\\ 0 & 1 \end{matrix}\right](\MOD \ell)  \right\},$$
(where `` * '' means `` unspecified '') and
 $\mathcal{H}$ the upper half complex plane. The modular curve $Y_1(\ell)$ is defined as the quotient
space of orbits under $\Gamma_1(\ell)$,
$$Y_1(\ell)=\Gamma_1(\ell)\backslash \mathcal{H}.$$
We can add cusps $\mathbb{P}^1(\mathbb{Q})$ to $Y_1(\ell)$ to compactify it and obtain the modular curve
$$X_1(\ell)=\Gamma_1(\ell)\backslash \mathcal{H}\cup \mathbb{P}^1(\mathbb{Q}).$$
This complex algebraic curve is defined over $\mathbb{Q}$, denoted by $X_1(\ell)_{\mathbb{Q}}$.
Moreover, for $\ell\ge 5$, $X_1(\ell)$ has natural model over $\mathbb{Z}[1/\ell]$.
Let $K$ be a number field, then $K$-valued points of $Y_1(\ell)_{\mathbb{Q}}$ can be interpreted as
$$Y_1(\ell)_{\mathbb{Q}}(K)=\{(E,P):E/K, P\in E[\ell](K)\setminus O\}/{\sim},$$
where $E$ is an elliptic curve over $K$, $P$ is a $K$-rational point of order $\ell$, and $(E_1,P_1)\sim(E_2,P_2)$ means that,
there exists a $\overline{K}$-isomorphism $\phi:E_1\to E_2$, such that $\phi(P_1)=P_2$. Such a moduli interpretation implies a
way to obtain a plane model of the modular curve, as the following proposition (see \cite{BAAZIZ}),

\begin{prop}Suppose that $\ell \ge 4$. Then every $K$-isomorphism class of pairs $(E,P)$ with $E$ an elliptic curve over $K$ and
$P \in E(K)$ a torsion point of order $\ell$
contains a unique model of the Tate normal form
\begin{equation}\label{TateNormalForm}
E_{(b,c)}:y^2+(1-c)xy-by =x^3-bx^2,P=(0,0),
\end{equation}
with $c\in  K,b \in K^*$.
\end{prop}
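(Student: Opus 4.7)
The plan is to establish existence by a sequence of admissible Weierstrass changes of variables over $K$, then deduce uniqueness by showing that any $K$-isomorphism between two Tate models preserving the marked point must be the identity.

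Starting from an arbitrary Weierstrass model of $E$ over $K$, I would first translate so that $P=(0,0)$, which forces the constant term to vanish and yields
\[E:\; y^2+a_1xy+a_3y=x^3+a_2x^2+a_4x.\]
Since $P$ has order $\ell\ge 4$, in particular it is not $2$-torsion; but $-P=(0,-a_3)$, so $a_3\ne 0$. The substitution $(x,y)\mapsto(x,y+rx)$ with $r=a_4/a_3\in K$ fixes $(0,0)$ and eliminates $a_4$, so after relabeling I may assume $a_4=0$. In this form the tangent to $E$ at $P$ is the line $y=0$, which meets $E$ in the residual point $(-a_2,0)$; if $a_2=0$ then $(0,0)$ would be a flex and $P$ would have order $3$, contradicting $\ell\ge 4$. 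Hence $a_2\ne 0$, and the admissible scaling $(x,y)\mapsto(u^2x,u^3y)$ with $u=a_3/a_2\in K^*$ sends $(a_2,a_3)\mapsto(u^{-2}a_2,u^{-3}a_3)$, producing the relation $a_2=a_3$. Setting $b=-a_2=-a_3$ and $c=1-a_1$ exhibits the equation in Tate normal form.

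For uniqueness, suppose $E_{(b,c)}$ and $E_{(b',c')}$ are two Tate models over $K$ with $(0,0)$ marked, and let $\phi$ be a $K$-isomorphism between them matching the marked points. Every Weierstrass isomorphism takes the shape $(x,y)\mapsto(u^2x+r,u^3y+su^2x+t)$; the condition $\phi(0,0)=(0,0)$ forces $r=t=0$. Substituting into the equation of $E_{(b',c')}$ and comparing with that of $E_{(b,c)}$, the coefficient of $x$ reads $b's\, u^{-4}$, which must vanish; since $b'\ne 0$ this gives $s=0$. The coefficients of $y$ and of $x^2$ then yield $b'u^{-3}=b=b'u^{-2}$, which forces $u=1$, whence $(b',c')=(b,c)$.

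The main obstacle is bookkeeping: identifying the correct arithmetic consequence of the order hypothesis at each step ($a_3\ne 0$ when $P$ has order $\ne 2$, and $a_2\ne 0$ when $P$ has order $\ne 3$ after the first reduction), so that every substitution is defined over $K$ rather than only over $\overline{K}$. Once these nonvanishings are in place the existence calculation is mechanical, and uniqueness follows because fixing the point $(0,0)$ rigidifies the four-parameter family of Weierstrass changes of variables down to a single relation that forces $u=1$.
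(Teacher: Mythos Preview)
Your argument is correct and is the standard proof of this well-known fact. The paper does not actually prove this proposition: it is stated with a citation to \cite{BAAZIZ} and used as input. Your route---translate $P$ to the origin, use $2P\neq O$ to get $a_3\neq 0$ and shear away $a_4$, use $3P\neq O$ to get $a_2\neq 0$ and rescale to force $a_2=a_3$, then show any Weierstrass change fixing $(0,0)$ between two Tate models must have $r=t=0$, then $s=0$ from the vanishing $a_4$-coefficient, then $u=1$ from comparing the $a_2$- and $a_3$-coefficients---is exactly the classical argument one finds in the literature (e.g.\ Husem\"oller, Knapp, or the cited paper of Baaziz). There is nothing to add.
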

Thus, points of $X_1(\ell)$ can be represented as pairs $(b,c)$ in a unique way. The $\ell$-th division polynomial gives a polynomial
in $b$ and $c$, which defines a plane curve birationally equivalent to $X_1(\ell)$. The defining equation becomes much simpler,
through a carefully chosen sequence
of rational transformations. We use the table listed in \cite{Sutherland12}, for example a plane model of $X_1(19)$ is
\begin{equation}
\begin{split}
f(x,y)=&y^5 - (x^2 + 2)y^4 - (2x^3 + 2x^2 + 2x - 1)y^3+ (x^5 + 3x^4 + 7x^3 + 6x^2 + 2x)y^2\\
&-(x^5 + 2x^4 + 4x^3 + 3x^2)y + x^3 + x^2,
\end{split}
\end{equation}
where
\begin{equation}\label{equation:r(xy)}
r=1+\frac{x(x+y)(y-1)}{(x+1)(x^2-xy+2x-y^2+y)},s=1+\frac{x(y-1)}{(x+1)(x-y+1)},
\end{equation}
and
\begin{equation}
c=s(r-1),b=rc.
\end{equation}
So the function field of $X_1(19)$ over $\mathbb{Q}$ is
$$\mathbb{Q}(X_1(19))=\mathbb{Q}(b,c)=\mathbb{Q}(x)[y]/(f(x,y)).$$

Cusps of $X_1(\ell)$ correspond to those pairs $(b,c)$ such that the $j$-invariant $j(E_{(b,c)})=\infty$.
For $\ell$ an odd prime, the modular curve $X_1(\ell)$ has $\ell-1$ cusps,
half of which are in $X_1(\ell)(\mathbb{Q})$ and the rest are defined over
the maximal real subfield of $\mathbb{Q}(\zeta_\ell)$.
Accordingly, every $\mathbb{Q}$-rational cusp corresponds to a degree one
place of $\mathbb{Q}(X_1(\ell))$, denote the rational cusps as $O_1,\ldots,O_{(\ell-1)/2}$,
the $\mathbb{Q}(\zeta_\ell)$-cusps corresponds to a degree $\frac{\ell-1}{2}$
place of $\mathbb{Q}(X_1(\ell))$. It is easy to get these places after writing down the exact
expression of $j(E_{(b,c)})$ in variables $x,y$. As in the above example,
one of the $\mathbb{Q}$-rational cusps looks like
\begin{displaymath}
\begin{split}
O_1=&\left(x,\frac{y^4}{x^4+x^3}+\frac{y^3(-x^3-x^2+x-1)}{x^4+x^3}+ \frac{y^2(-x^3-2x^2-2x-2)}{x^3+x^2}+\frac{2y}{x}+\frac{2x-1}{x}\right),
\end{split}
\end{displaymath}
where the place $O_1$ is represented by a prime ideal of the maximal orders of the function field $\mathbb{Q}(X_1(\ell))$.

It's well known that, the modular curve $X_1(\ell)$ has good reduction at prime $p\nmid \ell$, see \cite{Diamond}. The reduction curve is
denoted by $X_1(\ell)_{\mathbb{F}_p}$. Having a nonsingular affine model of $X_1(\ell)$, we can easily get an affine model
for $X_1(\ell)_{\mathbb{F}_p}$ and  then have the function field of $X_1(\ell)_{\mathbb{F}_p}$. For simplicity, the plane model
of $X_1(\ell)_{\mathbb{F}_p}$ and the $\mathbb{F}_p$-rational cusps of $X_1(\ell)_{\mathbb{F}_p}$, which are the reductions of
the $\mathbb{Q}$-rational cusps of $X_1(\ell)$, are also denoted by $f(x,y)$ and $O_i,i \in\{1,\ldots,\frac{\ell-1}{2}\}$, respectively.

The Ramanujan subspace $V_\ell \mod p$ is a subgroup scheme of the Jacobian variety  $J_1(\ell)_{\mathbb{F}_p}$  of $X_1(\ell)_{\mathbb{F}_p}$.
Similarly, it can be written as a finite intersection
$$V_\ell \mod p=\bigcap_{1\le i \le \frac{\ell^2-1}{6}}\ker(T_i-\tau(i),J_1(\ell)_{\mathbb{F}_p}[\ell]),$$
where $T_i,~1\le i\le \frac{\ell^2-1}{6}$ are Hecke operators, the number $\frac{\ell^2-1}{6}$ follows from \cite{Stu87}.
In fact, the Hecke algebra $\mathbb{T}=\mathbb{Z}[T_n:n\in \mathbb{Z}^+]$ $\subset$ End$(J_1(\ell))$ is a free $\mathbb{Z}$-module
of rank $g=\frac{(\ell-5)(\ell-7)}{24}$. After representing each Hecke operator as a matrix, see \cite{Edixhoven}, the generators can
be extracted from $T_1,\ldots,T_{(\ell^2-1)/6}$ by solving linear equations. For example, when the level $\ell=17$, the Hecke
algebra $\mathbb{T}$ is equal to $\mathbb{Z}T_1+\ldots+\mathbb{Z}T_{48}$ as an $\mathbb{Z}$-module, which can be replaced
by $\mathbb{Z}T_1+\mathbb{Z}T_2+\mathbb{Z}T_3+\mathbb{Z}T_4+\mathbb{Z}T_6$ as a free $\mathbb{Z}$-module,
so there are fewer Hecke operators and isogenies of lower degrees needed to take into account. But, in practice,
we can do much better, notice that our goal is to find nonzero elements in $J_1(\ell)_{\mathbb{F}_p}[\ell]$,
which are canceled by $T_k-\tau(k), k\ge 1$. Assume there is an element $D\in J_1(\ell)_{\mathbb{F}_p}[\ell]$
satisfying $(T_2-\tau(2))(D)=0$, then if we have the relations  $T_k-\tau(k)=\phi_k\cdot(T_2-\tau(2))$,
for some endomorphism $\phi_k\in$ End($J_1(\ell)_{\mathbb{F}_p}$) in advance, then $(T_k-\tau(k))(D)$
is equal to zero automatically, which implies that $D\in V_\ell$. The action of $T_k-\tau(k)$
on $J_1(\ell)_{\mathbb{F}_p}[\ell]$ can be represented by a matrix over finite field $\mathbb{F}_\ell$,
and the existence of $\phi_k$ is equivalent to the existence of some matrix $M_k$ over $\mathbb{F}_\ell$
such that $T_k-\tau(k)=M_k\cdot(T_2-\tau(2))$. For example, when the level $\ell\in \{13,17,19,23,29,37,41,43\}$,
for any $k\ge 3$, we have $T_k-\tau(k)=M_k\cdot( T_2-\tau(2))$, for some matrix $M_k$ over finite field $\mathbb{F}_\ell$,
while $\ell=31$, $T_3-\tau(3)$ also satisfies this property. In fact, we proved the following proposition.

\begin{prop}\label{propositon:linearlyinl}Let $\ell$ be a prime bigger than 3 and $S_2(\Gamma_1(\ell))$
the space of cusp modular forms of weight 2 level $\ell$ over $\mathbb{C}$.
Let $\mathbb{T}=\mathbb{Z}[T_n:n\ge1]$ $\subset$ $\textrm{End}(S_2(\Gamma_1(\ell)))$ be the Hecke algebra
and $\mathfrak{m}$ the maximal ideal generated by $\ell$ and the $T_n-\tau(n)$ with $n\ge1$.
Then $\mathfrak{m}$ can be generated by $\ell$ and the $T_n-\tau(n)$ with $1\le n\le \lceil\frac{2\ell+1}{12}\rceil$.
\end{prop}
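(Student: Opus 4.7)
Set $B := \lceil (2\ell+1)/12 \rceil$ and $I := (\ell, T_n - \tau(n) : 1 \le n \le B) \subseteq \mathbb{T}$. The plan is to recast the claim as a dimension count via Hecke duality and then apply Sturm's bound inside the appropriate nebentypus component of $S_2(\Gamma_1(\ell);\mathbb{F}_\ell)$. Since $I \subseteq \mathfrak{m}$ is immediate, the task reduces to showing $\dim_{\mathbb{F}_\ell} \mathbb{T}/I = 1$.

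First, I would invoke the Hecke duality given by $(T,f) \mapsto a_1(Tf)$, which (for $\Gamma_1(\ell)$ with $\ell$ prime) yields a perfect pairing $\mathbb{T}/\ell \times S_2(\Gamma_1(\ell);\mathbb{F}_\ell) \to \mathbb{F}_\ell$ matching ideals with Hecke-stable subspaces dimension-preservingly. Under it, $I/\ell\mathbb{T}$ corresponds to
$$V_I := \{f \in S_2(\Gamma_1(\ell);\mathbb{F}_\ell) : T_n f = \tau(n) f,\ 1 \le n \le B\},$$
so the problem becomes showing $\dim_{\mathbb{F}_\ell} V_I = 1$.

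Next, I would decompose $V_I = \bigoplus_\chi V_I^\chi$ under the commuting diamond operators, with $V_I^\chi \subseteq S_2(\Gamma_0(\ell), \chi; \mathbb{F}_\ell)$. A direct check verifies $B = \lceil(\ell+1)/6\rceil$, which is precisely Sturm's bound for weight $2$ on $\Gamma_0(\ell)$. For any $f \in V_I^\chi$ the identity $a_n(f) = a_1(T_n f) = \tau(n) a_1(f)$ converts the condition $a_1(f)=0$ into the vanishing of the first $B$ Fourier coefficients of $f$, so $f = 0$ by Sturm. Hence $\dim_{\mathbb{F}_\ell} V_I^\chi \le 1$ for every $\chi$.

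The heart of the proof is then to show that $V_I^\chi = 0$ unless $\chi$ equals the nebentypus $\chi_0 : d \mapsto d^{10} \bmod \ell$ of the weight-$2$ incarnation of $\Delta$ on $\Gamma_1(\ell)$. A nonzero $f \in V_I^\chi$ is automatically an eigenform for every $T_n$ (by Hecke-stability and the 1-dimensionality of $V_I^\chi$), and applying the recursion $T_{p^{r+1}} = T_p T_{p^r} - p \langle p\rangle T_{p^{r-1}}$ (for $p \ne \ell$) to $f$, together with the Ramanujan recursion $\tau(p^{r+1}) = \tau(p)\tau(p^r) - p^{11} \tau(p^{r-1})$, forces $\chi(p) \equiv p^{10} \pmod{\ell}$ whenever $p^{r+1} \le B$ and $\tau(p^{r-1}) \not\equiv 0 \pmod{\ell}$. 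Collecting these identities over enough small primes and using the multiplicativity of $\chi$ then pins down $\chi = \chi_0$, so $V_I = V_I^{\chi_0}$ has dimension at most $1$. Combined with the membership $f_\Delta \in V_I$, this gives $\dim V_I = 1$ and hence $I = \mathfrak{m}$.

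The main obstacle is this final character-determination step: the identities $\chi(p) = p^{10}$ are only accessible for primes $p$ with $p^2 \le B \approx \ell/6$, and for small $\ell$ there may be very few (or no) such primes. Handling this requires either squeezing more out of the higher-index and composite-index Hecke recursions, or separate verification of the exceptional small levels.
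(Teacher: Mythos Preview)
Your framework via Hecke duality and the nebentypus decomposition is sound and leads to the same bottleneck as the paper's argument, but, as you yourself flag, the proof is not closed at the character-determination step. The paper faces the identical obstacle in its own language---needing $p^{k_1-12}\equiv 1\pmod\ell$ for all primes $p$ with $p^2\le b_\ell$ to force the exponent---and resolves it by invoking the Burgess bound: the least primitive root modulo $\ell$ is $O(\ell^{1/4+\epsilon})$, which for large $\ell$ sits well below $\sqrt{b_\ell}\approx\sqrt{\ell/6}$, so some such $p$ already generates $(\mathbb{Z}/\ell\mathbb{Z})^\times$. That single analytic input is precisely the missing ingredient in your outline; once you have it, your argument concludes immediately since $V_I^{\chi_0}$ is then the only surviving piece and Sturm bounds its dimension by one. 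Small $\ell$ must indeed be checked separately, and the paper is equally silent on this point.

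Where the two routes genuinely differ is in how they package the eigenform side. You remain in weight $2$ level $\ell$, split by nebentypus, and use that $B$ coincides with the Sturm bound for $\Gamma_0(\ell)$ in weight $2$. The paper instead localizes $\mathbb{T}\otimes\mathbb{F}_\ell$ at its maximal ideals and, for each associated eigenform, invokes results of Gross and Ash--Stevens to realize it as (the reduction of) a level-one newform of some weight $k_1\le 2\ell$; the $T_{p^2}$-recursion then pins down $k_1\equiv 12\pmod{\ell-1}$, after which multiplying $\Delta$ by a power of the Hasse invariant $A(q)=1$ produces a weight-$k_1$ level-one form agreeing with $f$ up to $b_\ell\ge (k_1+1)/12$, and Sturm at level one finishes. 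Your approach is more elementary in that it sidesteps this level-lowering/weight-shifting machinery entirely; the paper's detour through level one, on the other hand, makes the appearance of the bound $2\ell$ (hence of $b_\ell=\lceil(2\ell+1)/12\rceil$) more visibly tied to Sturm. Either way, the decisive nontrivial input beyond Sturm is the Burgess-type bound on small primitive roots.
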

\begin{proof}Let $b_\ell:=\lceil\frac{2\ell+1}{12}\rceil$ and $R:=\mathbb{T}\otimes_{\mathbb{Z}}\mathbb{F}_\ell $,
then $R$ is an Artin ring, which can be decomposed as
$$R=\prod_{\wp}R_{\wp},$$
where $\wp$ runs through all maximal ideals of $R$, and $R_{\wp}$ is the localization of $R$ at $\wp$.

Let $\tilde{\mathfrak{m}}$ be the image of $\mathfrak{m}$ in $R$. Then, it is enough to prove $\tilde{\mathfrak{m}}$
can be generated by the $T_n-\tau(n)$ with $n\le b_\ell$, which is equivalent to show that any $T_k-\tau(k),k>b_\ell$ can be represented as
\begin{equation}\label{equation:prop1}
T_k-\tau(k)=\sum_{i=1}^{b_\ell}A_i\cdot(T_i-\tau(i)),
\end{equation}
where $A_i$ are operators in $R$.

Let $S_2(\Gamma_1(\ell);\overline{\mathbb{F}}_\ell):=S_2(\Gamma_1(\ell);\mathbb{Z})\otimes \overline{\mathbb{F}}_\ell$,
which is an $R$-module, decomposed as
$$S_2(\Gamma_1(\ell);\overline{\mathbb{F}}_\ell)=\prod_{\wp}S_2(\Gamma_1(\ell);\overline{\mathbb{F}}_\ell)_{\wp},$$
where $\wp$ runs through all maximal ideals of $R$, and $S_2(\Gamma_1(\ell);\overline{\mathbb{F}}_\ell)_{\wp}$ is the
localization of $S_2(\Gamma_1(\ell);\overline{\mathbb{F}}_\ell)$ at $\wp$, which is an $R_{\wp}$-module.
To show (\ref{equation:prop1}), it's enough to show for each $\wp$, the action $T_k-\tau(k),k>b_\ell$
on $S_2(\Gamma_1(\ell);\overline{\mathbb{F}}_\ell)_{\wp}$ can be represented as
\begin{equation}\label{equation:prop2}
T_k-\tau(k)=\sum_{i=1}^{b_\ell}B_i\cdot(T_i-\tau(i)),
\end{equation}
where $B_i$ are operators in $R_{\wp}$.

The maximal ideal $\wp$ of $R$ corresponds to a $\textrm{Gal}(\overline{\mathbb{F}}_\ell/\mathbb{F}_\ell)$-conjugate
class $[f]$ of normalized eigenforms in $S_2(\Gamma_1(\ell);\overline{\mathbb{F}}_\ell)$,
they are newforms of level $\ell$. Thus the localization $S_2(\Gamma_1(\ell);\overline{\mathbb{F}}_\ell)_{\wp}$ is a
vector space spanned by these newforms, so $R_{\wp}$ is a  field isomorphic to the field $\mathbb{F}_{\ell}(f)$,
which is generated by the coefficients of $f$ over $\mathbb{F}_\ell$.

Let $a_i(f)$ be the $i$-th coefficient of the Fourier expansion of $f$,
to show (\ref{equation:prop2}) it is enough to show $a_k(f)-\tau(k)$ is always equal to zero,
or there exists at least one nonzero element among the $a_i(f)-\tau(i)$ with $1\le i\le b_\ell$.

If $f$ is congruent to $\Delta(q)\mod\ell$, then $a_k(f)-\tau(k)$ is always equal to zero.
Now suppose $f$ is not congruent to $\Delta(q)$ modulo $\ell$ and $a_i(f)-\tau(i)=0$ for
$1\le i\le b_\ell$. The Proposition 4.10(b) of \cite{Gross} together with  Theorem 3.5(a)
of \cite{Ash-Ste} imply that $f$ comes from a level one newform of weight $k_1$ with $k_1\le2\ell$.
Since for any prime $p$, $\tau(p^2)=\tau(p)^2-p^{11}$ and $a_{p^2}(f)=a_p(f)^2-p^{k_1-1}$,
we have $p^{k_1-12}\equiv 1\mod \ell$ for all primes $p$ satisfying $p^2\le b_\ell$.
Since the least primitive root modulo $\ell$ is in $\textrm{O}(\ell^{\frac{1}{4}+\epsilon})$,
we have $k_1-12\equiv 0\mod(\ell-1)$.
Let $A(q)=1$ be the Hass invariant, then $f_1:=A(q)^{(k_1-12)/(\ell-1)}\Delta(q)$ is a newform of
level one weight $k_1$, and $a_k(f_1)=a_k(f)$ for all $k\le b_\ell$. Since $b_\ell=\lceil\frac{2\ell+1}{12}\rceil\ge\frac{k_1+1}{12}$,
the theorem of Sturm \cite{Stu87} (or see \cite{CG11}) tells that $f_1$ is congruent to $f$, that is $\Delta(q)$ is congruent to $f$
modulo $\ell$,
which is a contradiction. So the proposition is proved..
\end{proof}

Since $\ell$ and a subset of $\{T_k-\tau(k):1\le k\le \lceil\frac{2\ell+1}{12}\rceil\}$ may also suffice to
generate $\mathfrak{m}$, we introduce the optimal subset as follows.
\begin{definition}\label{definition:optimalset}Let $\mathcal{S}$ be a set of positive integers,
such that $\ell$ and $T_n-\tau(n),n\in\mathcal{S}$ generate $\mathfrak{m}$. The set $\mathcal{S}$ is
called optimal if $\prod_{n\in \mathcal{S}}n$ is minimal among all the $\mathcal{S}$.
\end{definition}
In practice only those operators in the optimal subset need to be considered, which will accelerate the algorithm.

Let $n\ge 2$ be a prime not equal to $\ell$, and $Q$ a point of $X_1(\ell)_{\mathbb{F}_p}(\overline{\mathbb{F}}_p)$
represented by $(E_{(b,c)},(0,0))$, the computation of  $T_n(Q)$ comes down to computing isogenies of elliptic curves
over some finite extension fields of $\mathbb{F}_p$
$$T_n(E_{(b,c)},(0,0))=\sum_{C}(E/C,(0,0)+C),$$
where $C$ runs over all the order $n$ subgroups of $E_{(b,c)}$. The following result about the complexity of computing $n$-isogeny
is in \cite{BOSTAN}, when $n$ is small compared to the characteristic of the field,

\begin{prop}Let $\mathbb{F}_q$ be a finite field of characteristic $p$, $n$ a prime not equal to $p$, $E$ and $\tilde{E}$ two
elliptic curves over $\mathbb{F}_q$ in Weierstrass form. Assume there is a normalized isogeny $\phi:E\to \tilde{E}$ of
degree $n$, then $\phi$ can be computed in $O(n^{1+\epsilon})$ multiplications in the field $\mathbb{F}_q$.
\end{prop}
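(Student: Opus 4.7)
The plan is to follow the formal-group / Padé-approximation strategy, which is the natural way to prove the $O(n^{1+\epsilon})$ bound. A normalized isogeny $\phi:E\to\tilde E$ of degree $n$ between elliptic curves in Weierstrass form can be written as $\phi(x,y)=(R(x),\, c\, y\,R'(x)+\ldots)$ with $R(x)=N(x)/D(x)\in\mathbb{F}_q(x)$, where $\deg N = n$ and $\deg D = n-1$, so specifying $\phi$ amounts to specifying at most $2n+1$ coefficients in $\mathbb{F}_q$. The normalization condition $\phi^{*}\omega_{\tilde E}=\omega_E$ (with $\omega$ the invariant differential) then provides enough algebraic structure to recover these coefficients from a suitably truncated power-series expansion.

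Concretely, the first step is to introduce local uniformizers at the identity, say $t=-x/y$ on $E$ and $\tilde t=-\tilde x/\tilde y$ on $\tilde E$, and to express the invariant differentials as $\omega_E=f(t)\,dt$ and $\omega_{\tilde E}=\tilde f(\tilde t)\,d\tilde t$, where $f,\tilde f\in\mathbb{F}_q[[t]]$ are given explicitly in terms of the Weierstrass coefficients. The isogeny, restricted to the formal group, is a power series $\hat\phi(t)=t+c_2t^2+c_3t^3+\cdots$ (the leading coefficient is $1$ because $\phi$ is normalized), and the pullback identity translates into the ODE
\begin{equation*}
\tilde f\bigl(\hat\phi(t)\bigr)\cdot \hat\phi'(t)=f(t).
\end{equation*}
Since $p\nmid n$ and in particular the leading terms are invertible, this ODE can be solved for $\hat\phi$ coefficient by coefficient, and in fact by Newton iteration to precision $O(n)$ in $O(n^{1+\epsilon})$ field multiplications using FFT-based power-series arithmetic.

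The second step is algebraic reconstruction. From $\hat\phi(t)$, one obtains the expansion of $\tilde x\circ\phi$ as a Laurent series in $t$; converting via the standard relation between $t$ and $x$ yields a power series for $R(x)=N(x)/D(x)$ of the form $x+a_1+a_2x^{-1}+\cdots$. Truncated to precision $2n+O(1)$, this series uniquely determines $N$ and $D$, and the pair can be recovered by a Padé/rational-reconstruction procedure based on a fast extended Euclidean algorithm (HGCD), again in $O(n^{1+\epsilon})$ multiplications. The $y$-component of $\phi$ is then determined by the chain rule together with the normalization, at no asymptotic extra cost.

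The main obstacle will be the bookkeeping for the Padé reconstruction: one must prove that the degree bounds $\deg N=n$, $\deg D=n-1$ together with $2n+O(1)$ terms of the series determine $(N,D)$ uniquely up to a scalar, and that the HGCD-based reconstruction indeed runs in quasi-linear time on inputs over $\mathbb{F}_q$. The other point requiring care is uniformity in $q$: one needs the hypothesis $p\nmid n$ to guarantee that the ODE for $\hat\phi$ can be integrated (i.e.\ no division by zero occurs in the Newton scheme), and that the leading coefficient of $D$ is a unit. Granted these two technical checks, everything else consists of standard fast polynomial and power-series arithmetic, yielding the claimed $O(n^{1+\epsilon})$ bound in multiplications in $\mathbb{F}_q$.
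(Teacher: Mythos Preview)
The paper does not prove this proposition; it quotes it from Bostan--Morain--Salvy--Schost (the reference \cite{BOSTAN}), prefaced by the caveat ``when $n$ is small compared to the characteristic of the field.'' Your outline is precisely the BMSS algorithm from that reference---formal-group expansion at the identity, solving the differential equation for $\hat\phi$ by Newton iteration, then rational reconstruction of $R(x)=N(x)/D(x)$ via fast HGCD---so you are aligned with the source the paper cites.

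One technical point needs correction. You claim the hypothesis $p\nmid n$ is what ``guarantee[s] that the ODE for $\hat\phi$ can be integrated.'' That is not sufficient: integrating a power series to precision $m$ requires dividing by each of $1,2,\ldots,m$, so one needs $p>m$, not merely $p\nmid n$. Since the required precision is of order $4n$, the genuine hypothesis for BMSS is $p>4n$ (or characteristic zero), which is exactly the ``$n$ small compared to the characteristic'' qualification the paper inserts just before the statement. Under the bare condition $n\ne p$ as literally written in the proposition, the Newton scheme can fail in small characteristic, and one must resort to other algorithms (Lercier--Sirvent, for instance). Your overall plan is sound; just state the characteristic restriction accurately.
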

We give two notices here: The first is that, the original elliptic curve $E_{(b,c)}$ is defined over some finite field $\mathbb{F}$,
but the isogeny may lie in the extension field of $\mathbb{F}$. The exact definition field of the isogeny can be obtained by computing
the $n$-th classical modular polynomial $\Phi_n(X,Y)$, and solving the equation $\Phi_n(X,j(E_{(b,c)}))=0$, so the extension degree is
less or equal to the degree of $\Phi_n(X,j(E_{(b,c)}))$. The second is that, in general the isogenous curve $\tilde{E}$ is not in Tate
normal form (\ref{TateNormalForm}).  Using the map $\phi$, or rather the point $\phi((0,0))\in \tilde{E}$, $\tilde{E}$ can be transformed
into Tate normal form after some coordinate changes, this gives a new point on the modular curve $X_1(\ell)_{\mathbb{F}_p}$.

\section{Computing in the jacobian of modular curves}
Let $\mathbb{F}_q$ be a finite extension of the finite field $\mathbb{F}_p$ and $X_1(\ell)_{\mathbb{F}_q}$ the base change
of $X_1(\ell)_{\mathbb{F}_p}$ to $\mathbb{F}_q$. One of the most important tasks of our algorithm is computing in the
Jacobian $J_1(\ell)_{\mathbb{F}_q}$ of the modular curve $X_1(\ell)_{\mathbb{F}_q}$. For general curves, we already
have polynomial time algorithms to perform operation (addition and subtraction) in their Jacobians \cite{Huang} \cite{Volcheck} \cite{Hess} \cite{KM}.
For the Jacobian of modular curve, Couveignes uses Brill-Noether algorithm to do the computation \cite{Couveignes}, while Bruin uses Khuri-Makdisi's
algorithm \cite{Bruin}. We choose He\ss's algorithm, with the advantage that it's easy know the correspondences between points of the modular curve
and places of its function field.

Let's recall the main idea of He\ss's algorithm, for the detail see \cite{Hess}. Let $\mathbb{K}=\mathbb{F}_q(x)[y]/(f(x,y))$ be the function field
of the modular curve $X_1(\ell)_{\mathbb{F}_q}$. There are isomorphisms
$$J_1(\ell)(\mathbb{F}_q)\cong  \textrm{Pic}^0(X_1(\ell)_{\mathbb{F}_q})\cong \textrm{Cl}^0(\mathbb{K}).$$
Notice that, there are some calculations behind the second isomorphism, i.e. computing the change of representations. He\ss's algorithm is based on
the arithmetic of the function field $\mathbb{K}$. Let $\mathbb{P}$ be the set of all places of $\mathbb{K}$ and $S$ the set of the places
of $\mathbb{K}$ over the infinite place $\infty$ of $\mathbb{F}_q(x)$. The ring of elements of $\mathbb{K}$ being integral at all places
of $S$ and $\mathbb{P}\setminus S$ are denoted by $\mathcal{O}_S$ and $\mathcal{O}^S$, respectively. They are Dedekind domains,
called infinite and finite order of $\mathbb{K}$, whose divisor groups are denoted by $\textrm{Div}(\mathcal{O}_S)$ and  $\textrm{Div}(\mathcal{O}^S)$,
respectively. Place of $\mathbb{K}$ corresponds to prime ideal of $\mathcal{O}_S$ or $\mathcal{O}^S$. In fact,
let $\textrm{Div}(\mathbb{K})$ be the divisor group of $\mathbb{K}$, then $\textrm{Div}(\mathbb{K})$ can be decomposed as \cite{Hess}
$$\textrm{Div}(\mathbb{K})\xrightarrow{\sim}\textrm{Div}(\mathcal{O}_S)\times \textrm{Div}(\mathcal{O}^S).$$

Notice that the plane model of $X_1(\ell)$ given in \cite{Sutherland12} has singularities above $x=0$ and $x=-1$, we can check that places
of $\mathbb{K}$ over the places $(x)$,$(x+1)$ and $(\frac{1}{x})$ of $\mathbb{F}_q(x)$ are cusps of $X_1(\ell)$. We will discuss how to compute the
action of Hecke operators on these places in Section 3.2.

Now, let's focus on how to compute the action of Hecke operators on the place corresponding to a set of smooth points of $f(x,y)=0$. Such a
place $\wp$ can be represented by two elements of $\mathbb{K}^{\times}$, which can be normalized as $f_1(x)=x^d+a_{d-1}x^{d-1}+\ldots+a_1x+1$
and $f_2(x,y)=y^m+b_{m-1}(x)y^{m-1}+\ldots+b_1(x)y+b_0(x)$, where $a_i,0\le i\le d-1$, belong to the constant field $\mathbb{F}_q$,
and $b_i(x),~0\le i\le m-1$ are elements of $\mathbb{F}_q[x]$, with degrees less than $d$. So the point set corresponding to  $\wp$
can be computed as follows. Let $f_1(x)$ and $f_2(x,y)$ be the normalized generators of $\wp$. We first compute the roots of $f_1(x)=0$,
denoted by $x_i,1\le i\le d$. Then, for each root $x_i$, compute the roots of $f_2(x_i,y)=0$, denoted by $y_{ij},1 \le j\le m$.
The point set corresponding to $\wp$ is $\{(x_i,y_{ij}):{1\le i\le d,1\le j\le m} \}$. Remind that every point $(x_i,y_{ij})$
satisfies $f(x_i,y_{ij})=0$. Conversely, given a point set $\{(x_i,y_{ij}):{1\le i\le d,1\le j\le m} \}$, the two generators for
the corresponding prime ideal  $\wp$ of $\mathcal{O}^S$ can be computed as follows. The first generator is clear,
which is $f_1(x)=\prod_{i=1}^d(x-x_i)$. The second one can be recovered as follow: Let $b_i(x)=\sum_{k=0}^{d-1}c_{ik}x^k,0\le i\le m-1$,
where $c_{ik}$ are parameters belong to $\mathbb{F}_q$, after interpolating the points $(x_i,y_{ij})$, we have linear equations of $md$ variables,
the second generator can be known by solving these equations.

So, a degree $d$  place  $\wp$  as in above  corresponds to a point set, denoted by $\{(x_i,y_i):1\le i\le d\}$, which forms a
complete $\Gal(\mathbb{F}_{q^d}/\mathbb{F}_q)$-conjugate set. Using the coordinate transformation formulae (\ref{equation:r(xy)}),
for each point $(x_i,y_i)$, the corresponding point on $X_1(\ell)_{\mathbb{F}_p}$ of the form $(E_{(b_i,c_i)},(0,0))$ is clear,
where $E_{(b_i,c_i)}$ is an elliptic curve in Tate normal form, and $(0,0)$ is a point of order $\ell$. As discussed in Section 2,
the action of Hecke operator $T_n$ on each point $(E_{(b_i,c_i)},(0,0))$ leads to a sequence of elliptic curves. Using the inverse
transformation  formulae these curves give the point sets on the affine curve $f(x,y)=0$.  Further, we have the corresponding places
of the function field $\mathbb{K}$.

For $P$ a place of $\mathbb{K}=\mathbb{F}_q(X_1(\ell))$, which is not equal to any of the cusps of $X_1(\ell)$, and $D$ a divisor
of $\mathbb{K}$ consists of such places, we define

\begin{definition} Let $P$, $D$ as above and  $T_n$  a Hecke operator. $T_n(P)$ is defined to be the divisor of $\mathbb{K}$,
corresponding to the point set
$\sum_{i=1}^d T_n(E_{(b_i,c_i)},(0,0))$,
which is effective of degree $\Psi(n)d$, where $\Psi(n)=n\prod_{p|n}(1+\frac{1}{p})$.
 Decompose $D $ as $\sum_{i=1}^m a_i P_i$, where $P_i$ are places, then $T_n(D)$ is defined to be $\sum_{i=1}^m a_iT_n(P_i)$.
\end{definition}

Let $D_0$ be a fixed degree one place of $\mathbb{K}$, which will be served as an origin. Every element of $\textrm{Cl}^0(\mathbb{K})$
can be represented by $D-gD_0$, where $g$ is the genus of $\mathbb{K}$, and $D$ is an effective divisor of degree $g$.
Addition in $\textrm{Cl}^0(\mathbb{K})$ means that, given effective divisors $A$ and $B$ of degree $g$, find an effective
divisor $D$ of degree $g$, such that $D-gD_0$ is linearly equivalent to $A-gD_0+B-gD_0$. The complexity of doing this
operation can be found in \cite{Hess},
\begin{prop}Notation is as above. There exists a constant $\omega\in[2,4]$ such that, the divisor $D$ can be computed
in $\textrm{O}(g^{\omega})$ multiplications in the field $\mathbb{F}_q$, i.e. $\textrm{O}(g^{\omega}\log^{1+\epsilon}q)$ bit operations.
\end{prop}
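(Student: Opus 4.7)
The plan is to reduce the class-group addition to linear algebra of dimension $O(g)$ over the constant field $\mathbb{F}_q$, following the ideal-theoretic framework of He\ss's algorithm already set up in the text. Each effective divisor of degree $g$ on $X_1(\ell)_{\mathbb{F}_q}$ is represented by a pair $(I^S, I_S)$ of fractional ideals, with $I^S$ a (projective) $\mathcal{O}^S$-ideal encoded by an Hermite-reduced $\mathbb{F}_q[x]$-basis and $I_S$ an $\mathcal{O}_S$-ideal encoded by an $\mathbb{F}_q[1/x]$-basis, via the decomposition $\textrm{Div}(\mathbb{K})\xrightarrow{\sim}\textrm{Div}(\mathcal{O}_S)\times \textrm{Div}(\mathcal{O}^S)$ recalled earlier. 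Because $[\mathbb{K}:\mathbb{F}_q(x)]$ is a constant depending only on the plane model $f(x,y)$, and the total degree of an effective divisor of degree $g$ is $O(g)$, each basis is given by a matrix of size $O(g)$ with entries in $\mathbb{F}_q$.

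First I would perform the addition at the ideal level: given the ideal pairs representing $A-gD_0$ and $B-gD_0$, form the product ideals in $\mathcal{O}^S$ and $\mathcal{O}_S$ and bring them back into Hermite form. Both ideal multiplication and Hermite reduction are standard $O(g)$-dimensional linear-algebra routines, so they cost $O(g^{\omega})$ multiplications in $\mathbb{F}_q$, where $\omega\in[2,4]$ is the effective exponent of the linear-algebra subroutines used (naive elimination gives $\omega=3$ or slightly more because of the polynomial entries, fast matrix multiplication pushes $\omega$ toward $2$). Second, I would reduce the resulting divisor class, which has degree $0$ but is no longer in the canonical form $D - gD_0$ with $D$ effective of degree $g$, by computing a Riemann--Roch space: for the intermediate divisor $E$ obtained from the ideal product, extract a nonzero element of $L(E + gD_0)$ and read off the effective representative from its divisor. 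The space $L(\cdot)$ is cut out as the kernel of a linear map between two $\mathbb{F}_q$-vector spaces of dimension $O(g)$, so this step also costs $O(g^{\omega})$ field multiplications.

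The main obstacle, and the point where I would need to lean on He\ss's paper for details, is verifying that the entire ideal arithmetic stays within $\mathbb{F}_q$-coefficient data of size $O(g)$: that the Hermite normal forms over $\mathbb{F}_q[x]$ do not blow up in $x$-degree beyond $O(g)$, that reduction modulo the generators $f_1(x)$ and $f_2(x,y)$ preserves these bounds, and that the Riemann--Roch kernel computation can be set up as a single $O(g) \times O(g)$ system over $\mathbb{F}_q$ rather than over a polynomial ring. Once this is established, each field operation costs $O(\log^{1+\epsilon} q)$ bit operations by FFT-based finite-field arithmetic, yielding the claimed total bit complexity $O(g^{\omega}\log^{1+\epsilon} q)$.
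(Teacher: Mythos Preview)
The paper does not prove this proposition at all: it is stated as a citation of He\ss's result (``The complexity of doing this operation can be found in \cite{Hess}''), and the subsequent remark even admits that the precise value of $\omega$ is not known to the authors. So there is nothing to compare your argument against in the paper itself.

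That said, your sketch is a fair outline of He\ss's method and identifies the right computational primitives (ideal multiplication, Hermite reduction, Riemann--Roch kernel). One point deserves more care: you write that $[\mathbb{K}:\mathbb{F}_q(x)]$ is a constant depending only on the plane model. For a \emph{fixed} curve that is tautologically true, but then so is $g$, and the $\textrm{O}(g^{\omega})$ statement is vacuous. The intended content is a bound uniform in $\ell$, and then the extension degree $n=\deg_y f$ grows with $\ell$ (for the Sutherland models it is of order $\sqrt{g}$, not $O(1)$). He\ss's actual complexity estimate is phrased in terms of both $n$ and the $x$-degree bound on the ideal bases; what makes everything come out as $\textrm{O}(g^{\omega})$ is that the product of these two parameters is $O(g)$, so the matrices involved have total $\mathbb{F}_q$-size $O(g)$ even though neither dimension is bounded. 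Your last paragraph correctly flags the degree-growth issue as the part needing He\ss's paper for justification, but the framing ``$[\mathbb{K}:\mathbb{F}_q(x)]$ constant'' earlier in the argument is misleading and should be replaced by this product bound.
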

\begin{remark}We have not yet seen the precise value of $\omega$.
However, using  Khuri-Makdisi's algorithm the complexity of a single group operation is known, i.e. $\omega=2.376$,
when fast algorithms for the linear algebra are used.
\end{remark}

The following definition in \cite{Hess} is very useful in our algorithm.
\begin{definition}Let $A$ be a divisor with $\deg(A)\ge1$. The divisor $\tilde{D}$ is called maximally reduced
along $A$ if $\tilde{D}\ge 0$ and $\textrm{dim}(\tilde{D}-rA)=0 $ holds for all $r\ge1$. Let now $D$ be any divisor and $\tilde{D}$ is a divisor
maximally reduced along $A$ such that $D$ is linearly equivalent to $\tilde{D}+rA$ for some $r\in\mathbb{Z}$. Then $\tilde{D}$ is called a
reduction of $D$ along $A$.
\end{definition}

If $\deg(A)=1$, then the reduction divisor $\tilde{D}$ is effective and unique.

\subsection{Searching an $\ell$-torsion point}
One of the main steps in computing the Ramanujan subspace $V_\ell\mod p$ is to find an $\ell$-torsion point in $J_1(\ell)$. In order to do that we have
to work with some large enough extension field $\mathbb{F}_q/\mathbb{F}_p$, such that  $J_1(\ell)(\mathbb{F}_q)$ contains $\ell$-torsion points.
A direct way to get such a point is, pick a random point $Q_0$ in
$J_1(\ell)(\mathbb{F}_q)$ and then compute $Q_1:=N_\ell Q_0$, where $N_\ell$ is the prime-to-$\ell$ part of $\#J_1(\ell)(\mathbb{F}_q)$. If $Q_1$ is
nonzero and $\ell$-torsion, then we succeed. Otherwise, try $Q_2:=\ell Q_1$, check again, after several steps, we can obtain a nonzero $\ell$-torsion
point. As $\#J_1(\ell)(\mathbb{F}_q)$ is bounded above by $q^g$, using fast exponentiation, the running time of getting an $\ell$-torsion point is
about $\log (q^g)\cdot \textrm{O}( g^{\omega})=\textrm{O}(g^{1+\omega}\log q)$ multiplications in the field $\mathbb{F}_q$
or $\textrm{O}(g^{1+\omega}\log^{2+\epsilon} q )$ bit operations. The computation is costly, due to the huge factor $N_\ell$,
which is nearly $q^g$. However, we can accelerate the calculation by introducing some tricks.

For each newform $f$ in $S_2(\Gamma_1(\ell))$, there is an associated abelian variety $A_f$, and $\prod_f A_f$ is an isogeny
decomposition of $J_1(\ell)$, where $f$ runs through a set of representatives for the Galois conjugacy classes of newforms
in $S_2(\Gamma_1(\ell))$. Let $f_\ell$ be the newform, which is congruent  to $\Delta(q)$ modulo $\ell$. Then the Ramanujan
subspace $V_\ell$ lands inside $A_{f_\ell}$.
Let $A':=\prod_f'A_f$ be the product of the abelian varieties, except $A_{f_\ell}$. The minimal polynomial of the Hecke operator $T_2$
acting on $A'$ is denoted by $P_2[X]\in\mathbb{Z}[X]$. Now for a random point $Q_0$ in $J_1(\ell)(\mathbb{F}_q)$,
$Q_1:=P_2(T_2)(Q_0)$ is a point in $A_{f_\ell}(\mathbb{F}_q)$. Similarly, let $n_\ell$ be the prime-to-$\ell$ part of $\#A_{f_\ell}(\mathbb{F}_q)$,
by computing $n_\ell Q_1$, we can obtain an $\ell$-torsion point of $J_1(\ell)(\mathbb{F}_q)$. Since the calculation of $P_2(T_2)(Q_0)$ is easy and
$n_\ell$ is smaller than $N_\ell$, which is bounded above by $q$ to the dimension of $A_{f_\ell}$, we can accelerate the algorithm, especially when
the dimension of $A_{f_\ell}$ is small compared to $g$. However, it seems hard to get a theoretical bound of the dimension of $A_{f_\ell}$. We remark
here several small examples

\begin{center}
\renewcommand\arraystretch{1.5}
\begin{tabular}{ccccccccccccc}
 \hline
    Level $\ell$    & 13 &  17 & 19 & 29 & 31 &  37  & 41  &43 &47 &53  &59  &61  \\
 \hline
   $\dim J_1(\ell)$ & 2  &  5  & 7  & 22 & 26 &  40  & 51  &57 &70 &92  &117 &126\\
 \hline
   $\dim A_{f_\ell}$& 2  &  4  & 6  & 12 & 4 &   18  & 6   &36 &66 &48  &112  &8\\
 \hline
 $\dim J_H(\ell)$   &    &     &    &    & 6 &       &11   &   &  &    &    &26\\
 \hline
\end{tabular}
\end{center}

Another method, suggested by Maarten Derickx, works perfectly when the level $\ell$ satisfying $\ell\equiv 1\mod10$. More precisely, let $\chi$ the
Dirichlet character associated to the newform $f_\ell$, then we have for any prime $p\not=\ell$, $\chi(p)\cdot p\equiv p^{11}\mod \ell$, hence
$\chi(p^{\frac{\ell-1}{10}})\equiv p^{\ell-1}\equiv 1\mod\ell$.
Which means that $f_\ell$ is invariant under the action of diamond operators in the
cyclic subgroup $H=\{p^{\frac{\ell-1}{10}}:p\not=\ell~ \textrm{prime}\}$ of $G=(\mathbb{Z}/\ell\mathbb{Z})^\times$. The Ramanujan subspace $V_\ell$
lands inside the Jacobian variety $J_H(\ell)$, where $J_H(\ell)$ is isogenous to the Jacobian of the modular curve $X_H(\ell)$ associated to the
subgroup of $\SL_2(\mathbb{Z})$ of matrices $[a,b;c,d]$ with $c$ divisible by $\ell$ and $a$ in $H$ modulo $\ell$. We can apply the algorithm to
the modular curve $X_H(\ell)$ and the Jacobian variety $J_H(\ell)$ instead of $X_1(\ell)$ and $J_1(\ell)$, respectively. This useful observation
enables us to carry out the calculation of the level $\ell=31$ case.

We now explain how to compute  $\#J_1(\ell)(\mathbb{F}_q)$.

\begin{lemma}(Manin, Shokurov, Merel, Cremona). For $\ell$ a prime and $p\not\in\{5,\ell\}$ another prime, the Zeta function of
$X_1(\ell)_{\mathbb{F}_p}$ can be computed in deterministic polynomial time in $\ell$ and $p$.
\end{lemma}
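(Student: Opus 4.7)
The plan is to invoke the theory of modular symbols (Manin, Shokurov, Merel, Cremona) together with the Eichler--Shimura congruence. Since $p\neq\ell$, the curve $X_1(\ell)$ has good reduction at $p$, and Eichler--Shimura furnishes the relation $\Frob_p^{\,2}-T_p\cdot\Frob_p+p\langle p\rangle=0$ on $J_1(\ell)_{\mathbb{F}_p}$. Translating to characteristic polynomials and multiplying over all newform components yields
\begin{equation*}
Z(X_1(\ell)_{\mathbb{F}_p},T)=\frac{\det\nolimits_{S_2(\Gamma_1(\ell))}\bigl(1-T_p\,T+p\langle p\rangle\,T^2\bigr)}{(1-T)(1-pT)},
\end{equation*}
so it suffices to produce, in time polynomial in $\ell$ and $p$, the matrices of $T_p$ and of the diamond operator $\langle p\rangle$ acting on the cuspidal modular symbols space $S_2(\Gamma_1(\ell))$.

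First I would build a basis for $M_2(\Gamma_1(\ell))$ via Manin symbols, indexed by pairs $(c,d)\in(\mathbb{Z}/\ell\mathbb{Z})^2$ with $\gcd(c,d,\ell)=1$; this gives $O(\ell^2)$ generators. Imposing the Manin 2-term and 3-term relations and then extracting $S_2(\Gamma_1(\ell))$ of dimension $g=(\ell-5)(\ell-7)/24$ as the kernel of the boundary map to the cusp module amounts to sparse $\mathbb{Z}$-linear algebra of size polynomial in $\ell$. Next I would compute $T_p$ by the Heilbronn--Merel matrices: the action on each Manin symbol is an explicit sum over $O(p)$ integer $2\times 2$ matrices, each of whose image is rewritten in the chosen basis via the Manin reduction algorithm, for a total cost polynomial in $\ell$ and $p$. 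The diamond operator $\langle p\rangle$ is induced by any matrix $\gamma\in\SL_2(\mathbb{Z})$ whose lower-right entry is congruent to $p$ modulo $\ell$, and requires only a single linear pass over the basis.

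Finally, the desired polynomial $\det(1-T_p\,T+p\langle p\rangle\,T^2)$ is the determinant of a $g\times g$ matrix of polynomials in $T$ of degree at most $2$; it is computable by interpolation at $2g+1$ values of $T$, each an integer-matrix determinant of size polynomial in $\ell$, and the Ramanujan--Petersson bound ensures that all integer coefficients stay of bit size $O(\log(\ell p))$, so every step remains polynomial. The main technical obstacle is not a mathematical one but rather the deterministic polynomial-size control of the lattice reductions and $\mathbb{Z}$-linear algebra inside the Manin symbols computation: ruling out exponential coefficient blow-up and providing an explicit reduction procedure is precisely the content of the algorithms developed by Merel and implemented by Cremona, which together with the earlier work of Manin and Shokurov justifies the collective attribution in the statement.
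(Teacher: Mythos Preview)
The paper does not actually give a proof of this lemma: it is stated as a known result, with the attribution in the lemma header serving as the citation, and the paper moves directly to a remark about how to use the zeta function. So there is no ``paper's own proof'' to compare against.

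That said, your sketch is exactly the argument the attribution is pointing to: Eichler--Shimura gives the numerator of the zeta function as $\det(1-T_pT+p\langle p\rangle T^2)$ on $S_2(\Gamma_1(\ell))$, and the Manin--Shokurov--Merel--Cremona machinery of modular symbols lets one compute the matrices of $T_p$ and $\langle p\rangle$ in deterministic time polynomial in $\ell$ and $p$. Your outline of the steps (Manin symbols, 2- and 3-term relations, boundary map, Heilbronn--Merel matrices, interpolation for the determinant) is correct and is precisely how this is done in practice and in \cite{Edixhoven}.

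One small correction: your claim that the Ramanujan--Petersson bound keeps coefficient bit-sizes at $O(\log(\ell p))$ is too optimistic. The numerator has degree $2g$ with $g\sim\ell^2/24$, and its coefficients are elementary symmetric functions in $2g$ numbers each of absolute value at most $2\sqrt{p}$, so the bit-size is $O(g\log p)=O(\ell^2\log p)$, not $O(\log(\ell p))$. This is still polynomial in $\ell$ and $p$, so the conclusion stands; just adjust the bound.
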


\begin{remark}Given this Zeta function we can easily compute $\#J_1(\ell)(\mathbb{F}_q)$, for $q$ is a power of $p$, see \cite{Couveignes}.
Similarly, we can compute $\#A_f(\mathbb{F}_q)$ in deterministic polynomial time in $\ell$ and $p$, by applying the algorithm to newforms in $[f]$,
where $[f]=\{f^\sigma~|~\sigma\in\GQ \}$.
\end{remark}

\subsection{Distinguishing the rational cusps}
In our algorithm, we will compute the action of Hecke operator $T_n$ on points of $J_1(\ell)$. Especially, we need to know the action of $T_n$ on the
$\mathbb{Q}$-rational cusps of $X_1(\ell)$. Notice that, $T_n,n\in\mathbb{Z}^+$ is defined over $\mathbb{Q}$,
so it maps $\mathbb{Q}$-rational point to $\mathbb{Q}$-rational point. As far as we know, there is no an easy way to know the action directly from
only the places $O_i,i\in \{1,\ldots,\frac{\ell-1}{2}\}$. But, a coset representatives of the $\mathbb{Q}$-rational cusps of $X_1(\ell)$ is clear,
i.e. \{$\frac{1}{1},\frac{1}{2},\ldots,\frac{1}{(\ell-1)/2}$\}. And for prime $n\ne \ell$, we have
$T_n(\frac{1}{m})=\frac{1}{m}+n\frac{1}{\overline{nm}}$, where $\overline{nm}$ is the class of integer $nm$ in
 $(\mathbb{Z}/\ell\mathbb{Z})^*/\{\pm1 \}$. So knowing the 1-1 correspondence between
$\{\frac{1}{i},1\le i\le \frac{\ell-1}{2}\}$ and $\{O_i,1\le i\le \frac{\ell-1}{2} \}$ leads to knowing the action of $T_n$ on $O_i$. In general,
it's not easy to know the correspondence \cite{HEON}. Our strategy is to reduce the problem to finite field, as follows. Choose a prime $p$,
such that $\#J_1(\ell)(\mathbb{F}_p)$ has a small factor $d$, let $g$ be the genus of the function field $\mathbb{F}_p(X_1(\ell))$.
Fix a cusp $O_i$ served as origin. As discussed above, let $D-gO_i$ be a degree zero random divisor of order $d$, in general $D$ doesn't contain cusps,
if it does, try a new one. Now, assume $O_i$ corresponds to $\frac{1}{m}$ for some $m\in\{1,\ldots,\frac{\ell-1}{2}\}$ and $O_j$ corresponds to
$\frac{1}{\overline{nm}}$ for some $j\in\{1,\dots,\frac{\ell-1}{2}\}$. Then, for prime $n$, compute $D_n:=T_n(D-gO_i)$ by the assumption as follows
$$D_n=T_n(D)-gT_n(O_i)=T_n(D)-g(O_i+nO_j).$$
If $D_n$ is not of order $d$, then the assumption is wrong. Replace $O_i$ by another $\frac{1}{m}$, or $\frac{1}{\overline{nm}}$ by another $O_j$,
try again. The complete correspondence can be detected after several tries\footnote{We can use the diamond operators instead, which is faster,
suggested by Maarten Derickx}. In fact, the correspondence is known up to cyclic permutation, but it is enough for our algorithm.
There is one more thing should be noticed, the degree 0 divisors of the form $O_i-O_j,1\le i,j\le \frac{\ell-1}{2}$ generate a
subgroup of $J_1(\ell)(\mathbb{F}_p)$, which is called cuspidal subgroup, so the chosen factor $d$ should not be a multiple of the order of the
cuspidal subgroup.

As the example in Section 2, the correspondence between $\{O_1,\ldots,O_9\}$ and $\{\frac{1}{1},\ldots,\frac{1}{9}\}$ is

\begin{displaymath}
\begin{array}{ccccccccc}
\frac{1}{1} &\frac{1}{5} &\frac{1}{6} &\frac{1}{7} &\frac{1}{4} &\frac{1}{3} &\frac{1}{2} &\frac{1}{8} &\frac{1}{9}\\

\updownarrow & \updownarrow &\updownarrow &\updownarrow &\updownarrow &\updownarrow &\updownarrow &
\updownarrow &\updownarrow    \\

O_1 & O_2 & O_3 & O_4 & O_5 &O_6 &O_7&O_8&O_9.
\end{array}
\end{displaymath}

After the above preparation, we can now estimate the complexity of computing $T_n(Q)$, where $Q$ is a degree zero $\mathbb{F}_q$-divisor of the
form $\sum_{i=1}^m a_i P_i-gO$, $O$ is fixed to be the cusp $O_1$ and $n$ is a prime not equal to $\ell$.

As mentioned above, we first compute the point set of the degree $d_i$ place $P_i$ and pick one of them forms a point $(E_{(b_i,c_i)},(0,0))$ on
the modular curve, which is defined over $\mathbb{F}_{q^{d_i}}$. Denote the factorization of $\Phi_n(X,j(E_{(b_i,c_i)}))$ over $\mathbb{F}_{q^{d_i}}[X]$
as $\prod_{j=1}^h F_j(X)$, with $F_j(X)$ irreducible of degree $f_j$. Now for each root of $F_j(X)=0$, there is an isogeny of degree $n$ defined over
$\mathbb{F}_{q^{d_if_j}}$. Computing this isogeny takes
$$\textrm{O}(n^{1+\epsilon} \cdot (\log q^{d_if_j})^{1+\epsilon} )=\textrm{O}((nd_if_j\log q)^{1+\epsilon})$$
bit operations.

Notice that, isogenous curves corresponding to the roots of $F_j(X)$ forms a $\textrm{Gal}(\mathbb{F}_{q^{d_if_j}}/\mathbb{F}_{q^{d_i}})$-conjugate
set. So it suffices to compute any one of them. So the complexity of computing $T_n((E_{(b_i,c_i)},(0,0)))$ is about
$$C_i:=\sum_{j=1}^h \textrm{O}((nd_if_j\log q)^{1+\epsilon}).$$
Since $\sum_{j=1}^h f_j=n+1$, $C_i$ is bounded above by $\textrm{O}((nd_in\log q)^{1+\epsilon})$.

The complexity of computing $T_n(Q)$ is $C:=\sum_{i=1}^m C_i$. Since $\sum_{i=1}^m a_i d_i=g$, $C$ is bounded above by
$\textrm{O}((n^2g\log q)^{1+\epsilon} )$ bit operations.

Notice that after the action of $T_n$, the divisor $T_n(Q)$ becomes complicated. We would like to simplify it before going into further calculation.
In general, the reduction of $T_n(Q)$ along the degree one divisor $O$ comes down to performing at most $n$ additions in the Jacobian,
with a complexity of $\textrm{O}(ng^\omega\log^{1+\epsilon} q)$ bit operations.

\section{Computing the coefficients of modular forms}

After some modification, the simplified algorithm proposed in \cite{Couveignes}, can be used to compute the Ramanujan subspace
$V_\ell\mod p$ efficiently. We can even give a complexity analysis of the algorithm. Roughly speaking, the algorithm works as follows: Step one,
pick some random points in $J_1(\ell)(\mathbb{F}_q)$. Step two, construct $\ell$-torsions points by multiplying these points by some suitable
factors of $\#J_1(\ell)(\mathbb{F}_q)$. Step three, project the $\ell$-torsions points into the space $V_\ell\mod p$ by the Hecke operators.
Step four, reconstruct $V_\ell/\mathbb{Q}$ from sufficiently many $V_\ell\mod p$, by the Chinese Remainder Theorem.

\subsection{Computing the Ramanujan subspace modulo $p$}

The characteristic polynomial of the Frobenius endomorphism $\Frob_p$ acting on the Ramanujan subspace $V_\ell\mod p$ is
$X^2-\tau(p)X+p^{11}\mod \ell$ and the  field of definition of each point of $V_\ell\mod p$ is an extension of $\mathbb{F}_{p}$
with degree $\le d_p$, where
$$d_p=\min_t\{t:X^t\equiv1\mod (X^2-\tau(p)X+p^{11},\ell),t\ge1\}.$$
For convenience, we define
\begin{definition}\label{s-goodprime}Notations as above, a prime $p$ is called $s$-good if $d_p<\ell$.
\end{definition}

The following proposition shows that nearly half of the primes are $s$-good.

\begin{prop}Let $d_p$ defined as above, then we have
$$\lim_{x\to\infty}\frac{|\{p:p~ \textrm{prime},d_p<\ell,p<x\}|}{|\{p:p~\textrm{prime},p<x\}|}\ge\frac{\ell^3-\ell^2-2\ell+2}{2(\ell^3-\ell)}.$$
\end{prop}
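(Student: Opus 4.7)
The plan is to apply the Chebotarev density theorem to the mod-$\ell$ Galois representation $\rho_\ell:\GQ\to\GL_2(\mathbb{F}_\ell)$ attached to $\Delta$. By the theorem of Serre and Swinnerton-Dyer, $\rho_\ell$ is surjective onto $\GL_2(\mathbb{F}_\ell)$ for every prime $\ell$ outside the finite exceptional set $\{2,3,5,7,23,691\}$, which may be handled separately by a direct check. Under surjectivity, for any union $S$ of conjugacy classes of $\GL_2(\mathbb{F}_\ell)$, the density of primes $p$ with $\Frob_p\in S$ equals $|S|/|\GL_2(\mathbb{F}_\ell)|$, where $|\GL_2(\mathbb{F}_\ell)|=\ell(\ell-1)^2(\ell+1)$.

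I would then isolate two families of conjugacy classes of $\GL_2(\mathbb{F}_\ell)$ that force $d_p<\ell$. First, if $\Frob_p$ is split semisimple with distinct eigenvalues in $\mathbb{F}_\ell^\times$ (equivalently, $\tau(p)^2-4p^{11}$ is a nonzero square modulo $\ell$), its order on $V_\ell\bmod p$ divides $\ell-1$, so $d_p\le\ell-1<\ell$. Second, if $\Frob_p=\lambda I$ is a scalar matrix, it acts on $V_\ell\bmod p$ as multiplication by $\lambda\in\mathbb{F}_\ell^\times$ and again has order $\mathrm{ord}(\lambda)$ dividing $\ell-1$. The Jordan-block classes have order divisible by $\ell$, and the non-split semisimple classes have order dividing $\ell^2-1$ (which may or may not be below $\ell$); I would omit both families to obtain a clean lower bound. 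The split semisimple classes correspond bijectively to unordered pairs $\{\lambda_1,\lambda_2\}\subset\mathbb{F}_\ell^\times$ with $\lambda_1\neq\lambda_2$, giving $\binom{\ell-1}{2}$ classes each of size $\ell(\ell+1)$, hence $\tfrac12(\ell-1)(\ell-2)\ell(\ell+1)$ matrices, and the $\ell-1$ scalars form singleton classes. Summing and dividing by $\ell(\ell-1)^2(\ell+1)$ yields
\[
\frac{\tfrac12(\ell-1)(\ell-2)\ell(\ell+1)+(\ell-1)}{\ell(\ell-1)^2(\ell+1)}=\frac{(\ell-2)\ell(\ell+1)+2}{2\ell(\ell-1)(\ell+1)}=\frac{\ell^3-\ell^2-2\ell+2}{2(\ell^3-\ell)},
\]
matching the stated lower bound.

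The main technical input is the Serre--Swinnerton-Dyer open-image theorem for $\rho_\ell$; everything else is elementary linear algebra over $\mathbb{F}_\ell$. One interpretive point worth flagging is that the formal $d_p$ defined as the order of $X$ in $\mathbb{F}_\ell[X]/(X^2-\tau(p)X+p^{11})$ overestimates the true order of $\Frob_p$ on $V_\ell\bmod p$ exactly when the characteristic polynomial has a double root; but $d_p$ is introduced to control the field of definition of points of $V_\ell\bmod p$, which only feels the true order, so the scalar classes (true order $\mathrm{ord}(\lambda)<\ell$, formal order $\ell\cdot\mathrm{ord}(\lambda)\ge\ell$) are legitimately counted above while the non-scalar Jordan classes (true order a nontrivial multiple of $\ell$) are correctly excluded.
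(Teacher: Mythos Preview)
Your approach is essentially the paper's: Chebotarev plus counting the scalar and split-semisimple conjugacy classes in $\GL_2(\mathbb{F}_\ell)$, and your closing remark on the scalar case is exactly the subtlety the paper elides by identifying $d_p$ with the order of $\rho_\ell(\Frob_p)$. One correction: Serre--Swinnerton-Dyer does \emph{not} give surjectivity onto $\GL_2(\mathbb{F}_\ell)$ in general, since $\det\rho_\ell=\chi_\ell^{11}$ forces the image to lie in $\{g:\det g\in(\mathbb{F}_\ell^\times)^{11}\}$, which is proper when $11\mid\ell-1$; the paper works inside this subgroup. Fortunately your count is unaffected, because the number of classes of each type (scalar, Jordan, split, non-split) meeting the determinant condition is a common multiple $L=|(\mathbb{F}_\ell^\times)^{11}|$ of the unrestricted count, and $L$ cancels in the ratio.
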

\begin{proof}Let $\rho_\ell:\GQ\to\GL_2(\mathbb{F}_\ell)$ be the mod-$\ell$ representation associated to the discriminant modular form $\Delta(q)$.
Denote $K_\ell$ the fixed field of $\ker\rho_\ell$. Then $\rho_\ell$ factors through
$\rho_\ell:\textrm{Gal}(K_\ell/\mathbb{Q})\to\GL_2(\mathbb{F}_\ell)$, which is unramified outside $\ell$.
Now, for a prime $p$ not equal to $\ell$, we have $\rho_\ell(\Frob_p)\in \GL_2(\mathbb{F}_\ell)$ and $d_p$ is the order of the matrix
$\rho_\ell(\Frob_p)$. By the Chebotarev Density Theorem, we have for any conjugacy class $C$ of $G:=\textrm{Gal}(K_\ell/\mathbb{Q})$
the set $\{p:p~\textrm{a prime},p\not=\ell,\Frob_p\in C\}$ has density $|C|/|G|$. Define  $\mathcal{C}=\cup_{\ord(C)<\ell} C$,
where $\ord(C)$ represents the order of any element in $C$, then we have
$$\lim_{x\to\infty}\frac{|\{p:p~ \textrm{prime},d_p<\ell,p<x\}|}{|\{p:p~\textrm{prime},p<x\}|}=\frac{|\mathcal{C}|}{|G|}.$$
Hence we would like to know the image of the representation $\rho_\ell$.

If $\ell$ is an exceptional prime for $\Delta(q)$, i.e. $\ell\in\{2,3,5,7,23,691\}$, then the representation $\rho_\ell$ is reducible
or $\textrm{Im}(\rho_\ell)$ in $\GL_2(\mathbb{F}_{23})$ is dihedral. We can check that $\frac{|\mathcal{C}|}{|G|}\ge\frac{1}{2}$.

If $\ell$ is not  exceptional, we have $\textrm{Im}(\rho_\ell)=\{g\in \GL_2(\mathbb{F}_\ell): \det(g)\in(\mathbb{F}_\ell^\times)^{11}\}$.
The representatives of conjugacy classes in $\GL_2(\mathbb{F}_\ell)$ are as follow

$c_1(x):=\left[\begin{smallmatrix} x & 0 \\ 0 & x \end{smallmatrix}\right], x\in\mathbb{F}_\ell^\times,$

$c_2(x):=\left[\begin{smallmatrix} x & 1 \\ 0 & x \end{smallmatrix}\right], x\in\mathbb{F}_\ell^\times,$

$c_3(x,y):=\left[\begin{smallmatrix} x & 0 \\ 0 & y \end{smallmatrix}\right], x\not=y\in\mathbb{F}_\ell^\times,c_3(x,y)=c_3(y,x),$

$c_4(z):=\left[\begin{smallmatrix} x & Dy \\ y & x \end{smallmatrix}\right],
z=x+\sqrt{D}y\in\mathbb{F}_{\ell^2} \setminus\mathbb{F}_{\ell},c_4(z)=c_4(\bar z)$, where $\bar z:=x-\sqrt{D}y.$

Here $c_3(x,y)=c_3(y,x)$ means that the conjugacy classes of these two elements agree. Let $C_1(x)$ be the conjugacy class with
representative $c_1(x)$, then we have $|C_1(x)|=1$, $\ord(C_1(x))|(\ell-1)$ and there are $N_1=\ell-1$ such classes. Similarly,
we have $|C_2(x)|=\ell^2-1$, $\ord(C_2(x))|\ell(\ell-1)$, $N_2=\ell-1$; $|C_3(x,y)|=\ell(\ell+1)$, $\ord(C_3(x,y))|(\ell-1)$,
$N_3=\frac{1}{2}(\ell-1)(\ell-2)$; $|C_4(z)|=\ell(\ell-1)$, $\ord(C_4(z))|(\ell^2-1)$, $N_4=\frac{1}{2}\ell(\ell-1)$,
we can see $|C_1(x)|N_1+|C_2(x)|N_2+|C_3(x,y)|N_4+|C_4(z)|N_4=|\GL_2(\mathbb{F}_\ell)|=(\ell-1)^2\ell(\ell+1)$.

The subgroup $\textrm{Im}(\rho_\ell)$ consists of those conjugacy classes whose representative matrices have determinant
in $(\mathbb{F}_\ell^\times)^{11}$. For example $\textrm{Im}(\rho_\ell)$ contains conjuagcy classes with representatives $c_1(x)$
satisfying $x^2\in(\mathbb{F}_\ell^\times)^{11}$. Denote the number of such classes by $M_1$ and set $L:=|(\mathbb{F}_\ell^\times)^{11}|$.
Then we have $M_1=L$. Similarly $M_2=L$, $M_3=\frac{1}{2}L(\ell-2)$ and $M_4=\frac{1}{2}\ell L$. So we have
$$\frac{|\mathcal{C}|}{|G|}\ge\frac{M_1+M_3\cdot
\ell(\ell+1)}{M_1+M_2\cdot(\ell^2-1)+M_3\cdot\ell(\ell+1)+M_4\cdot\ell(\ell-1)}=\frac{\ell^3-\ell^2-2\ell+2}{2(\ell^3-\ell)}.$$
\end{proof}

Now let $p$ be an  $s$-good prime and $\mathbb{F}_q:=\mathbb{F}_{p^{d_p}}$, then $V_\ell\mod p$ is a subgroup of $J_1(\ell)(\mathbb{F}_q)[\ell]$.

For every integer $n\ge 2$, the characteristic polynomial of $T_n$ acting on $S_2(\Gamma_1(\ell))$ is a degree $g$ monic polynomial belonging
to $\mathbb{Z}[X]$. We denote it by $A_n(X)$, which can be factored as
$$A_n(X)\equiv B_n(X)(X-\tau(n))^{e_n}~\mod \ell,$$
with $B_n(X)$ monic and $B_n(\tau(n))\not=0 \in \mathbb{F}_\ell$. The exponent $e_n$ is $\ge 1$ due to the theorem of congruence of modular
forms (Thm2.5.7 of \cite{Edixhoven}).
We call $\pi_n:J_1(\ell)(\mathbb{F}_q)[\ell]\to J_1(\ell)(\mathbb{F}_q)[\ell] $ the projection map. Which maps an $\ell$-torsion
point $Q\in J_1(\ell)(\mathbb{F}_q)[\ell] $ to an $\ell$-torsion point $B_n(T_n)(Q)$ of $J_1(\ell)(\mathbb{F}_q)[\ell]$,
and maps bijectively $V_\ell\mod p$ onto itself. Assume $E:=\pi_n(Q)\not=0$, define the exponent $d_n$  as the nonnegative integer satisfying
$$(T_n-\tau(n))^{{d_n}}(E)\not=0~\textrm{and} ~(T_n-\tau(n))^{{d_n}+1}(E)=0,$$
then $d_n$ is in $[0,e_n)$, since $(T_n-\tau(n))^{e_n}(E)=0$.
Let $\tilde{\pi}_n$ be the composition map of $\pi_n$ and $(T_n-\tau(n))^{d_n}$ and $\pi_{\mathcal{S}}:=\prod_{n\in\mathcal{S}}\tilde{\pi}_n$,
where $\mathcal{S}$ is an optimal set defined in Definition \ref{definition:optimalset}. Then we have $\pi_{\mathcal{S}}(Q)\in V_\ell\mod p$ .

So, the complexity of finding a nonzero point in $V_\ell\mod p$  can be determined as following.

As described in Section 3, it takes $\textrm{O}(g^{1+\omega}\log^{2+\epsilon} q )=\textrm{O}(\ell^{4+2\omega+\epsilon}\log^{2+\epsilon}p)$
bit operations to get an $\ell$-torsion point of $J_1(\ell)(\mathbb{F}_q)$. Denote the $\ell$-torsion point as $Q_0=D-gO$, where $D$ is
an effective divisor of degree $g$. For $n\in \mathcal{S}$, the map $\tilde{\pi}_n$ can be written as $T_n^d+a_{d-1}T_n^{d-1}+\ldots+a_1T_n+a_0$,
where $a_i\in\mathbb{F}_\ell$ and $d<g$. The divisor $\tilde{\pi}_n(Q_0)$ can be computed recursively, i.e. compute and simplify(as mentioned
in Section 3.2)
$Q_{i+1}:=T_n(Q_i)$ for $i=0,\ldots,d-1$. The complexity of each step is
$\textrm{O}((n^2g\log q)^{1+\epsilon})+\textrm{O}(ng^\omega\log^{1+\epsilon} q)$,
which is $\textrm{O}(\ell^{2+2\omega+\epsilon}\log^{1+\epsilon}p)$, since $n\in \textrm{O}(\ell)$, $g\in\textrm{O}(\ell^2)$
and $\omega\in[2,4]$. As $d$ is in $\textrm{O}(g)$, computing and simplifying $Q_0,\ldots,Q_d$ takes
$d\cdot\textrm{O}(\ell^{2+2\omega+\epsilon}\log^{1+\epsilon}p)=\textrm{O}(\ell^{4+2\omega+\epsilon}\log^{1+\epsilon}p)$.
Given $Q_i$, since $a_i\in\mathbb{F}_\ell$ the complexity of computing $Q_d+a_{d-1}Q_{d-1}+\ldots+a_1Q_1+a_0Q_0$
is bounded above by $d\ell\cdot\textrm{O}( g^{\omega}\log^{1+\epsilon} q )=\textrm{O}(\ell^{4+2\omega+\epsilon}\log^{1+\epsilon}p)$.
In summary, the complexity of computing $\tilde{\pi}_n(Q_0)$ is still in $\textrm{O}(\ell^{4+2\omega+\epsilon}\log^{1+\epsilon}p)$.
So the complexity of computing $\pi_{\mathcal{S}}(Q_0)=\prod_{n\in \mathcal{S}} \tilde{\pi}_n(Q_0)$ is bounded above by
$\textrm{O}(\ell^{5+2\omega+\epsilon}\log^{1+\epsilon}p)$, since $|\mathcal{S}|<\ell$. So it takes
\begin{equation}\label{complexityofVlmodp}
\textrm{O}(\ell^{4+2\omega+\epsilon}\log^{2+\epsilon}p)+\textrm{O}(\ell^{5+2\omega+\epsilon}\log^{1+\epsilon}p)
=\textrm{O}(\ell^{4+2\omega+\epsilon}\log^{1+\epsilon}p\cdot(\ell+\log p))
\end{equation}
bit operations to get a point in $V_\ell \mod p$.

For two nonzero random points $Q_1,Q_2\in J_1(\ell)(\mathbb{F}_q)[\ell]$, the probability that $\pi_S(Q_1)$ and $\pi_S(Q_2)$
are linearly independent is close to 1-$\frac{1}{\ell}$. So after several attempts we will get a base of $V_\ell\mod p$.
The complexity of checking linearly independence of two elements in $V_\ell\mod p$ is
$\textrm{O}(\ell g^\omega\log^{1+\epsilon} q)=\textrm{O}(\ell^{2+2\omega+\epsilon}\log^{1+\epsilon}p)$.
Hence the complexity of getting a base of $V_\ell \mod p$ is the same as given in (\ref{complexityofVlmodp}).

The Frobenius endomorphism $\Frob_p$ may help us to get a base of $V_\ell\mod p$ with lower cost in some cases.
Namely, if the characteristic polynomial $X^2-\tau(p)X+p^{11}\mod \ell$ is irreducible, then for any nonzero
point $Q\in V_\ell\mod p$, $\Frob_p(Q)$ and $Q$ are linearly independent. So
$V_\ell \mod p=\mathbb{F}_\ell Q+\mathbb{F}_\ell(\Frob_p(Q))$. If $X^2-\tau(p)X+p^{11}\mod\ell$ has two different roots
in $\mathbb{F}_\ell$, then for any nonzero point $Q\in V_\ell\mod p$ the probability that $Q$ and $\Frob_p(Q)$ are linearly independent
is $1-\frac{1}{\ell}$.

\begin{remark}(1)In practice, the optimal set $\mathcal{S}$ contains only small primes, for example, $\mathcal{S}=\{2\}$
for $\ell\in\{13,17,19\}$. The algorithm takes the main effort to get an $\ell$-torsion point.

(2)If without Proposition \ref{propositon:linearlyinl}, the complexity comes up to
\begin{equation}
\textrm{O}(\ell^{4+2\omega+\epsilon}\log^{2+\epsilon}p)+\textrm{O}(\ell^{7+2\omega+\epsilon}\log^{1+\epsilon}p)
=\textrm{O}(\ell^{4+2\omega+\epsilon}\log^{1+\epsilon}p\cdot(\ell^3+\log p)).
\end{equation}
\end{remark}

\subsection{Computing the Ramanujan subspace}
Fix a $\mathbb{Q}$-rational cusp $O$ of $X_1(\ell)$, which will be served as the origin of the Jacobi map.
For every point $x\in V_\ell$, let $D$ be the reduction of $x$ along $O$, i.e. $D=x+dO$. Then $D$ is an effective divisor of degree $d$, which can
be decomposed as $D=Q_1+\ldots+Q_d$, here $d$ is called the stability of $x$, denoted by $\theta(x)$. Choose a rational function
$\psi(x)\in\mathbb{Q}(X_1(\ell))$, which has no pole except at $O$, and define a function $\iota:V_\ell\to \overline{\mathbb{Q}}$
as $\iota(x)=\psi(Q_1)+\ldots+\psi(Q_d)\in \overline{\mathbb{Q}}$. From the uniqueness of $D$, we have $\iota(\sigma(x))=\sigma(\iota(x))$
for any $\sigma\in\textrm{Gal}(\overline{\mathbb{Q}}/\mathbb{Q})$, which is very important. Let $\wp$ be a prime ideal of $\mathbb{Q}(x)$
over $p$, we hope that the uniqueness property still holds in the reduction world, which means the following: let
$\tilde{x},\tilde{O},\tilde{D}$ and $\tilde{Q}_i$ be the reductions of $x,O,D$ and $Q_i$ modulo $\wp$ respectively.
The reduction of $\tilde{x}$ along $\tilde{O}$ is denoted by $\tilde{E}=\tilde{x}+\theta(\tilde{x})\tilde{O}$,
where $\theta(\tilde{x})\le \theta(x)$. If $\theta(\tilde{x})=\theta(x)$, then by the uniqueness property,
we have $\tilde{E}=\tilde{D}=\tilde{Q}_1+\ldots+\tilde{Q}_d$. As we don't know the value $\theta(x)$ in advance,
an algorithm to determine when $\theta(\tilde{x})=\theta(x),~\forall x\in V_\ell$ is needed. In \cite{Bruin},
Bruin gave such an algorithm and proved that, for at least half of the primes smaller than $\ell^\textrm{O(1)}$,
the following holds: $\theta(\tilde{x})=\theta(x),~ \forall x \in V_\ell$ , such primes are called $\mathfrak{m}$-good primes.
In practice, since $\theta(x)$ is less or equal to the genus of $X_1(\ell)$, if $\theta(\tilde{x})$
is equal to the genus for every $\tilde{x}\in V_\ell\mod p$, then $\theta(\tilde{x})=\theta(x)$
automatically for all $x\in V_\ell$. We remark here that our computation suggests that most of the primes are $\mathfrak{m}$-good.

A prime $p$ is called good if it simultaneously satisfies: $\mathfrak{m}$-good and $s$-good. It is reasonable to assume that
there exists an absolute constant $c$ such that the density of good primes is bigger than $c$. Here we make no attempt to prove this.
Now for any good prime $p$
$$P(X):=\prod_{x\in V_\ell\setminus\textrm{O}}(X-\iota(x))$$
is a polynomial in $\mathbb{Q}[X]$ of degree $\ell^2-1$, whose reduction modulo $p$ is exactly the polynomial
$$\tilde{P}(X):=\prod_{\tilde{x}\in V_\ell~\textrm{mod}~p\setminus\textrm{O}}(X-\tilde{\iota}(\tilde{x})),$$
where $\tilde{\iota}$ is the reduction map of $\iota$.

Given $V_\ell \mod p=\mathbb{F}_\ell e_1+\mathbb{F}_\ell e_2$, the computation of  $\tilde{P}(X)$ comes down to performing $\ell^2$
additions in the Jacobian, with a complexity
$\ell^2\cdot\textrm{O}( \ell^{1+2\omega}\log^{1+\epsilon} p)=\textrm{O}( \ell^{3+2\omega}\log^{1+\epsilon} p)$.

Since the heights of coefficients of $P(X)$ are expected to be in $\textrm{O}(\ell^\delta)$, for some absolute constant $\delta$.
Using the fact, there exists a constant $c$ such that
$$\prod_{p\le L,\textrm{prime}}p>c\cdot\exp(L).$$
To recover $P(X)$ from $\tilde{P}(X)$'s, it suffices to take the upper bound $L$ of good primes to be $\textrm{O}(\ell^\delta)$. So the
complexity of computing $P(X)$ will be
$$\sum_{p \le L,\textrm{prime}}\textrm{O}(\ell^{4+2\omega+\epsilon}\log^{1+\epsilon}p\cdot(\ell+\log p))=\ell^{5+2\omega+\delta+\epsilon}.$$
In practice, it would be better to choose the function $\psi(x)\in\mathbb{Q}(X_1(\ell))$, such that the degree is equal to the gonality of
the curve $X_1(\ell)$. For $\ell\le 40$, $\psi(x)$ have been computed by Derickx and Hoeij, see \cite{hoeij} and \cite{Derickx}.

\subsection{Finding the Frobenius endomorphism }

The Galois representation associated to $\Delta(q)$ is denoted by
$\rho_\ell:\textrm{Gal}(\overline{\mathbb{Q}}/\mathbb{Q})\to \textrm{GL}_2(\mathbb{F}_\ell)$.
Let $K_\ell:=\overline{\mathbb{Q}}^{\ker{\rho_\ell}}$ be the field cut out by the representation, then $K_\ell$ is the splitting field of the
polynomial $P(X)$. For any prime $p\not=\ell$, the trace of Frobenius $\textrm{Tr}(\rho_\ell(\textrm{Frob}_p))$ is equal to $\tau(p)\mod\ell$.
So we would like to identify the conjugacy class of the Galois group $\textrm{Gal}(K_\ell/\mathbb{Q})$, where the Frobenius $\textrm{Frob}_p$
lands inside. The algorithm described in \cite{Dokchitser} can be used perfectly to do the computation.

Let $(a_i)_{1\le i\le\ell^2-1}$ be the roots of $P(X)$ in $K_\ell$ and $h(X)$ some polynomial in $\mathbb{Q}[X]$.
Then for each conjugacy class $C\subset \textrm{Gal}(K_\ell/\mathbb{Q})$,
\begin{equation}\label{traceformula}
\textrm{Frob}_p\in C\Leftrightarrow \Gamma_C(\textrm{Tr}_{\frac{\mathbb{F}_p[x]}{P(x)}/\mathbb{F}_p}(h(x)x^p))\equiv0\mod p,
\end{equation}
where the polynomial $\Gamma_C(X)$ is given by
\begin{equation}\label{GammaCX}
\Gamma_C(X)=\prod_{\sigma\in C}\left(X-\sum_{i=1}^{\ell^2-1}h(a_i)\sigma(a_i) \right).
\end{equation}
Our strategy for computing $\Gamma_C(X)$ is using  Hensel lifting.

\begin{prop}Let $p$ be a prime such that the extension degree $d_p$ is in $\textrm{O}(1)$. Given the polynomial $P(X)$ as in Section 4.2
and the Ramanujan subspace $V_\ell\mod p$. Then for any conjugacy class $C\subset \textrm{Gal}(K_\ell/\mathbb{Q})$,
the polynomial $\Gamma_C(X)\in\mathbb{Q}[X]$ can be computed in $\textrm{O}(\ell^{6+\delta+\epsilon})$ bit operations.
\end{prop}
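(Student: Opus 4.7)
The plan is to compute $\Gamma_C(X)\in\mathbb{Q}[X]$ modulo a sufficiently large power $p^k$ via Hensel lifting, and then recover the rational coefficients from the height bound. That $\Gamma_C(X)$ lies in $\mathbb{Q}[X]$ follows from the fact that $C$ is a conjugacy class of $\Gal(K_\ell/\mathbb{Q})$: for any $\tau\in\Gal(K_\ell/\mathbb{Q})$ a direct reindexing gives $\tau(T_\sigma)=T_{\tau\sigma\tau^{-1}}$, and $\tau$ only permutes the factors of $\Gamma_C(X)$.

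Concretely I would proceed as follows. First, using the given basis $(e_1,e_2)$ of $V_\ell\mod p$ together with the function $\iota$, identify the $\ell^2-1$ roots of $P(X)\mod p$ with the nonzero vectors $(a,b)\in\mathbb{F}_\ell^2\setminus\{0\}$ via $a_{(a,b)}:=\iota(ae_1+be_2)\mod p$, thereby fixing an indexing of the roots $a_i$. Next, apply Hensel's lemma to lift the factorization of $P(X)$ from $\mathbb{F}_p$ to $\mathbb{Z}/p^k$, obtaining every $a_i\bmod p^k$ in $\widetilde{\textrm{O}}(\ell^2\log p^k)$ bit operations. Then enumerate the matrices $M$ of the conjugacy class $C\subset\textrm{Im}(\rho_\ell)\subset\GL_2(\mathbb{F}_\ell)$ using the classification recalled in the proof of the preceding proposition; one has $|C|\le\ell(\ell+1)=\textrm{O}(\ell^2)$. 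Under the identification above the Galois element corresponding to $M$ sends $a_{(a,b)}$ to $a_{M(a,b)^{t}}$, which is a table lookup after an $\mathbb{F}_\ell$-linear matrix-vector product; so after a one-time precomputation of all $h(a_i)\bmod p^k$, each $T_M=\sum_i h(a_i)\,a_{M\cdot i}\bmod p^k$ is computed in $\textrm{O}(\ell^2)$ multiplications modulo $p^k$, and all $|C|=\textrm{O}(\ell^2)$ values of $T_M$ in $\textrm{O}(\ell^4)$ such multiplications. Finally, form $\prod_{M\in C}(X-T_M)\bmod p^k$ with a subproduct tree and recover $\Gamma_C(X)\in\mathbb{Q}[X]$ coefficient by coefficient from its reduction modulo $p^k$.

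The main quantitative point, and the main obstacle, is to bound the precision $k$, which amounts to bounding the heights of the coefficients of $\Gamma_C(X)$. The hypothesis that $P(X)$ has coefficients of height $\textrm{O}(\ell^\delta)$ gives $\log|a_i|=\textrm{O}(\ell^\delta)$, and for a fixed $h$ of bounded degree this yields $\log|T_M|=\textrm{O}(\ell^\delta)$ after summing $\ell^2-1$ terms. Expanding the product of $|C|=\textrm{O}(\ell^2)$ linear factors then shows that the coefficients of $\Gamma_C(X)$ have heights $\textrm{O}(\ell^{2+\delta})$, so taking $\log p^k=\textrm{O}(\ell^{2+\delta})$ is enough. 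Each arithmetic operation modulo $p^k$ thus costs $\widetilde{\textrm{O}}(\ell^{2+\delta})$ bit operations, and combined with the $\textrm{O}(\ell^4)$ operations above this yields the announced $\textrm{O}(\ell^{6+\delta+\epsilon})$ bound; the Hensel lift and the subproduct-tree multiplication are both $\widetilde{\textrm{O}}(\ell^{4+\delta})$ and hence absorbed in lower-order terms, and the final rational reconstruction is negligible.
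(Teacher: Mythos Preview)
Your argument is correct and follows essentially the same approach as the paper: Hensel-lift the roots of $P(X)$ to $p$-adic precision $\textrm{O}(\ell^{2+\delta})$ using the indexing supplied by $V_\ell\bmod p$, then assemble $\Gamma_C(X)$ from the lifted roots via the known permutation action of $C\subset\GL_2(\mathbb{F}_\ell)$. One small correction: the roots $\tilde\iota(x)$ lie in $\mathbb{F}_q=\mathbb{F}_{p^{d_p}}$, not in $\mathbb{F}_p$, so the lift is to the unramified extension $\mathbb{Q}_q$ (equivalently to $W(\mathbb{F}_q)/p^k$) rather than to $\mathbb{Z}/p^k$; since $d_p=\textrm{O}(1)$ this does not affect your complexity estimates.
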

\begin{proof}
Set $\mathbb{F}_{q}:=\mathbb{F}_{p^{d_p}}$. We can lift each root $\tilde{\iota}(x)\in \mathbb{F}_q$ of $\tilde{P}(X)\in \mathbb{F}_p[X]$ to
the root $\iota(x)\in \mathbb{Q}_q$ of  $P(X)\in\mathbb{Q}_p[X]$ by Hensel's lemma, where $\mathbb{Q}_q$ is the unramified extension
of $\mathbb{Q}_p$ with extension degree $d_p$. Since the heights of coefficients of $\Gamma_C(X)$ are bounded above by
$N:=|C|(1+\deg h)\ell^\delta$, to recover $\Gamma_C(X)$, it suffices to lift each $\tilde{\iota}(x)$ to $\iota(x)$ with precision $N$.
The complexity of a single lifting is about $\textrm{O}((|C|(d_pN))^{1+\epsilon})$ bit operations, see \cite{Avanzi-Cohen}.
The length of the conjugacy class $C$ is in  $\textrm{O}(\ell^2)$, the extension degree $d_p$ is in $\textrm{O}(1)$ and $h(X)$
can be chosen to be a polynomial with small degree (e.g. $\deg h(X)=2$ ) showed in \cite{Dokchitser}, so the complexity of lifting a
single root is in $\textrm{O}(\ell^{4+\delta+\epsilon})$. There are $\ell^2-1$ roots need to be lifted, so the complexity of computing
$\Gamma_C(X)$ is $\textrm{O}(\ell^{6+\delta+\epsilon})$.
\end{proof}
\begin{remark}The Galois group $\textrm{Gal}(K_\ell/\mathbb{Q})$ is a subgroup of $\textrm{GL}_2(\mathbb{F}_\ell)$, and the
action of $\textrm{Gal}(K_\ell/\mathbb{Q})$ on the roots $a_i,1\le i\le \ell^2-1$ can be computed from the action of
$\textrm{GL}_2(\mathbb{F}_\ell)$ on $V_\ell \mod p$. In practice, we would like to choose the prime $p$ with large size and small
extension degree $d_p$ at the same time.
\end{remark}

\subsection{Complexity analysis}

Now we can prove Theorem \ref{theorem:complexity} and Corollary \ref{cor:complexity}. As shown in above, the complexity of
computing $P(X)$ is $\textrm{O}(\ell^{5+2\omega+\delta+\epsilon})$. There are $\ell^2-1$ conjugacy classes in
$\textrm{GL}_2(\mathbb{F}_\ell)$. So the complexity of computing $\Gamma_C(X)$ for all the conjugacy classes
$C\subset \textrm{GL}_2(\mathbb{F}_\ell)$ is $\textrm{O}(\ell^{8+\delta+\epsilon})$.

Let $p$ be a prime not equal to $\ell$. Denote the polynomial $P(X)$ as
$$P(X)=X^{\ell^2-1}+c_{\ell^2-2}X^{\ell^2-2}+\ldots+c_0.$$
The trace in (\ref{traceformula}) can be interpreted as a trace of a matrix
$$\textrm{Tr}_{\frac{\mathbb{F}_p[x]}{P(x)}/\mathbb{F}_p}(x^d)=\textrm{Tr}\left(
                                                                            \begin{array}{cccc}
                                                                              0 &  &  & -c_0 \\
                                                                              1 &  &  & -c_1 \\
                                                                              & \ddots &  & \vdots \\
                                                                               &   &   1&-c_{\ell^2-1}  \\
                                                                            \end{array}
                                                                          \right)^d \mod p.
$$
Using Coppersmith-Winograd algorithm, the complexity of multiplying two $\ell^2\times\ell^2$ matrices over $\mathbb{F}_p$ is
in $\textrm{O}(\ell^{4.752}\log^{1+\epsilon}p)$ bit operations. So $t:=\textrm{Tr}_{\frac{\mathbb{F}_p[x]}{P(x)}/\mathbb{F}_p}(h(x)x^p)$
can be computed in $\textrm{O}(\ell^{4.752}\log^{2+\epsilon}p)$. Given $t$, the complexity of checking whether $\Gamma_C(t)\mod p$ is equal
to zero for all the conjugacy classes $C$ is bounded above by $\textrm{O}(\ell^4\log^{1+\epsilon}p)$.

So, for any prime $p\not=\ell$, the complexity of computing $\tau(p)\mod\ell$ consists of the complexity of computing $P(X)$, $\Gamma_C(X)$,
$\textrm{Tr}(h(x)x^p)$ and $\Gamma_C(\textrm{Tr}(h(x)x^p))$ for all conjugacy classes $C\subset\GL_2(\mathbb{F}_\ell)$, which sums up to
\begin{equation}
\textrm{O}(\ell^{5+2\omega+\delta+\epsilon})+\textrm{O}(\ell^{4.752}\log^{2+\epsilon}p)+\textrm{O}(\ell^4\log^{1+\epsilon}p).
\end{equation}

For prime $p$, we have $\tau(p)\in \textrm{O}(p^6)$. Therefore it suffices to recover $\tau(p)$ from $\tau(p)\mod \ell$ with $\ell\le L$,
where $L$ is in $\textrm{O}(\log p)$. So the total complexity of computing $\tau(p)$ is about
\begin{equation}
\sum_{\ell<L,\textrm{prime}}\textrm{O}(\ell^{5+2\omega+\delta+\epsilon})+\textrm{O}(\ell^{4.752}\log^{2+\epsilon}p)+
\textrm{O}(\ell^4\log^{1+\epsilon}p)
=\textrm{O}(\log^{6+2\omega+\delta+\epsilon}p).
\end{equation}

\begin{remark}
If the constant $\omega$ reaches $2.376$, the complexity of the algorithm is $\textrm{O}(\log^ {10.752+\delta+\epsilon}p )$.
Moreover, if $\delta$ is bounded above by $3$, the complexity is $\textrm{O}(\log^{13.752+\epsilon}p )$.
\end{remark}

\section{Implementation and results}
The algorithm has been implemented in MAGMA. One big advantage of the algorithm is that, it is rather straightforward to implement,
where the major work is dealing with the action of Hecke operators on divisors of the function field.
The following computation was done on a personal computer AMD FX(tm)-6200 Six-Core Processor 3.8GHz.

Let $Q_\ell(x)$ be the polynomial corresponding to the projective representation, defined as
$$Q_\ell(X):=\prod_{L\in\mathbb{P}(V_\ell)}(X-\sum_{\alpha\in L\setminus\textrm{O}}{\iota}(\alpha)),$$
which can be used to check whether $\tau(p)\equiv 0 \mod\ell$. More precisely, we have the following lemma, see \cite{Bosman}
\begin{lemma}Let $Q_\ell(X)$ be the polynomial defined as above and $p\nmid\textrm{Disc}(Q_\ell(X))$ a prime. Then
$\tau(p)\equiv 0 \mod\ell$ if and only if $Q_\ell(X)\mod p$ has an irreducible factor of degree 2 over $\mathbb{F}_p$.
\end{lemma}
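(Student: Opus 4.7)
The plan is to reduce the statement to a purely group-theoretic assertion about the image of Frobenius in $\textrm{PGL}_2(\mathbb{F}_\ell)$, and then to perform a short case analysis on the Jordan type of $\rho_\ell(\Frob_p)$. Because $p\nmid\Disc(Q_\ell)$, the prime $p$ is unramified in the splitting field $K_\ell$ of $Q_\ell$, and the Dedekind--Kummer dictionary identifies the factorization type of $Q_\ell(X)\bmod p$ over $\mathbb{F}_p$ with the cycle type of $\Frob_p$ acting on the roots of $Q_\ell$. By construction those roots are indexed by the lines $L\in\mathbb{P}(V_\ell)\cong\mathbb{P}^1(\mathbb{F}_\ell)$, and the Galois action factors through the projective mod-$\ell$ representation $\bar\rho_\ell\colon\GQ\to\textrm{PGL}_2(\mathbb{F}_\ell)$. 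Thus the lemma reduces to showing that $\bar M:=\bar\rho_\ell(\Frob_p)\in\textrm{PGL}_2(\mathbb{F}_\ell)$ admits an orbit of size exactly $2$ on $\mathbb{P}^1(\mathbb{F}_\ell)$ if and only if $\tau(p)\equiv 0\pmod\ell$.

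Next I would prove the auxiliary claim that $\bar M$ has a $2$-orbit on $\mathbb{P}^1(\mathbb{F}_\ell)$ if and only if $\bar M$ has order exactly $2$ in $\textrm{PGL}_2(\mathbb{F}_\ell)$. Fixing a lift $M\in\GL_2(\mathbb{F}_\ell)$, whose characteristic polynomial is $X^2-\tau(p)X+p^{11}\bmod\ell$, one runs through the four Jordan types: if $M$ is scalar, then $\bar M$ is trivial; if $M$ is non-scalar with a repeated eigenvalue, then $\bar M$ is unipotent of order $\ell\geq 13$, so its nontrivial orbits have size $\ell>2$; if $M$ is split semisimple with distinct eigenvalues $\lambda_1,\lambda_2$, then the two eigenlines are fixed and on their complement $\bar M$ acts by multiplication by $\lambda_1/\lambda_2$, so all non-fixed orbits have size $\ord(\lambda_1/\lambda_2)=\ord(\bar M)$; and if $M$ is non-split, then $\bar M$ lies in a non-split torus of $\textrm{PGL}_2(\mathbb{F}_\ell)$, which acts simply transitively on $\mathbb{P}^1(\mathbb{F}_\ell)$, so every orbit of $\langle\bar M\rangle$ has size $\ord(\bar M)$. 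In each case a $2$-orbit exists precisely when $\bar M$ has order $2$; the converse is automatic because an order-$2$ element fixes at most $2$ points whereas $|\mathbb{P}^1(\mathbb{F}_\ell)|=\ell+1\geq 14$.

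Finally I would translate ``$\bar M$ has order $2$'' into an arithmetic condition on $\tau(p)$. The property is equivalent to $M^2$ being scalar while $M$ is not. Combined with the Cayley--Hamilton relation $M^2=\tau(p)M-p^{11}I$, the matrix $M^2$ is scalar precisely when $\tau(p)M$ is scalar, which for non-scalar $M$ forces $\tau(p)\equiv 0\pmod\ell$. Conversely, if $\tau(p)\equiv 0\pmod\ell$, then $M^2=-p^{11}I$ is scalar, and $M$ itself cannot be scalar, since a scalar $M=\lambda I$ would satisfy $2\lambda\equiv 0$ and $\lambda^2\equiv p^{11}\not\equiv 0\pmod\ell$, impossible for $\ell\geq 13$. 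The hard part of the argument is the middle step, especially the non-split case where one must know that the non-split torus in $\textrm{PGL}_2(\mathbb{F}_\ell)$ is simply transitive on $\mathbb{P}^1(\mathbb{F}_\ell)$; once that structural fact is granted, the remaining verifications are routine.
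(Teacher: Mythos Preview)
Your argument is correct. The reduction via Dedekind--Kummer to the cycle structure of $\bar\rho_\ell(\Frob_p)$ on $\mathbb{P}^1(\mathbb{F}_\ell)$ is the right framework, and your case analysis by Jordan type is clean: in particular, the key structural input---that the image of the non-split torus in $\textrm{PGL}_2(\mathbb{F}_\ell)$ has order $\ell+1$ and acts freely (hence simply transitively) on $\mathbb{P}^1(\mathbb{F}_\ell)$---is exactly what is needed, and your justification (no $\mathbb{F}_\ell$-rational eigenline, and $|\mathbb{P}^1(\mathbb{F}_\ell)|=\ell+1$) is sound. The Cayley--Hamilton step translating $\ord(\bar M)=2$ into $\tau(p)\equiv 0\pmod\ell$ is also correct, including the exclusion of the scalar case.

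As for comparison with the paper: the paper does \emph{not} supply its own proof of this lemma. It is stated with a reference to Bosman's thesis (\cite{Bosman}) and used as a black box for the numerical verification of Lehmer's conjecture. So there is no argument in the paper to compare against; your write-up is a self-contained proof that could replace the citation. If anything, you might streamline the middle step by observing directly that for \emph{any} non-identity $\bar M\in\textrm{PGL}_2(\mathbb{F}_\ell)$ the non-fixed orbits on $\mathbb{P}^1(\mathbb{F}_\ell)$ all have size $\ord(\bar M)$ (this is immediate from the fact that $\textrm{PGL}_2$ acts sharply $3$-transitively, so the stabilizer of any non-fixed point in $\langle\bar M\rangle$ is trivial); this collapses the four cases into one line.
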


\begin{example}$\ell=13$.

To recover $Q_{13}(X)$, it suffices to take a good prime set as
$$\{ 19, 23, 29, 43, 53, 61, 67, 71, 79, 83, 89, 109, 127, 149,
157, 163, 179, 193, 211, 223, 229, 233, 239, 241 \},$$
with a total of 24 primes, whose product is a 52 digits number. We have
\begin{displaymath}
\begin{split}
2535853\cdot Q_{13}(X)=&2535853X^{14} + 760835865X^{13} + 96570870461X^{12} + 7083218145770X^{11} +\\
    &341554192651282X^{10} + 11596551892957577X^9 + 288394789072144586X^8 +\\
    &5369247990154339694X^7 + 75509842125272520446X^6 +\\
    &800346109631330635243X^5 + 6303044886777591079517X^4 +\\
    &35793920471135235999031X^3 + 138667955645963961606844X^2 +\\
    &328650624808255716476451X + 361128579432826593902125.
\end{split}
\end{displaymath}
The computation took several minutes.

The product of the primes needed to recover $P_{13}(X)$ is about 130 digits and the computation took nearly one hour.

In order to recover the polynomials $\Gamma_C(X)$ for $C\subset\textrm{GL}_2(\mathbb{F}_{13})$, we first choose a good prime
$p=34939$ with $d_p=2$, and then compute the roots of $P_{13}(X) \mod p$ by computing $V_{\ell}\mod p$. Notice that all of the roots
are in $\mathbb{F}_{p^2}$. Using Hensel lemma, we lift each root to the $p$-adic field $\mathbb{Q}_{p^2}$ with a
precision nearly 5000 digits and then reconstruct $\Gamma_X(X)$ by the formula (\ref{GammaCX}). The computation took several hours.
\end{example}

We summarize the results in the following table, where the matrices in the last column are the representatives of the conjugacy
classes where the Frobenius endomorphism $\Frob_p$ land inside and  prime $p$ is set to be $10^{1000}+1357$.

\begin{center}
\renewcommand\arraystretch{1.5}
\begin{tabular}{ccccc}

\hline
level &
\begin{tabular}{cc}

\multicolumn{2}{c}{$Q_\ell(X)$} \\
good primes &  time\\
\end{tabular} &

\begin{tabular}{cc}

\multicolumn{2}{c}{$P_\ell(X)$} \\

good primes & time \\

\end{tabular} &
\begin{tabular}{c}

$\Gamma_C(X)$ \\

time \\

\end{tabular} &
$\Frob_p$  \\
 \hline
13&
\begin{tabular}{cc}

52 digits & few minutes\\

\end{tabular} &

\begin{tabular}{cc}

130 digits & few minutes \\

\end{tabular}&
few hours &
$\left[\begin{smallmatrix} 10 & 0 \\ 0 & 7 \end{smallmatrix}\right]$\\
 \hline
17&
\begin{tabular}{cc}

467 digits & few hours \\

\end{tabular} &

\begin{tabular}{cc}

740 digits & few hours \\

\end{tabular}&
one day &
$\left[\begin{smallmatrix} 15 & 1\\ 0 & 15 \end{smallmatrix}\right]$\\
 \hline
19&
\begin{tabular}{cc}

832 digits & few days \\

\end{tabular} &

\begin{tabular}{cc}

1681 digits & few days \\

\end{tabular}&
few days &
$\left[\begin{smallmatrix} 17 & 1 \\ 0 & 17 \end{smallmatrix}\right]$\\
\hline
\end{tabular}

\end{center}

Using a plane model for $X_H(31)$, we also finished the level 31 case. It took several days to recover the polynomial $Q_{31}(X)$.
The coefficients of $Q_{31}(X)$ are very large, where the biggest one reaches 2426 digits. Similar to the $\ell\in\{13,17,19\}$ cases,
$Q_{31}(X)$ can be reduced to a polynomial with small coefficients. One of the reduced polynomials is
\begin{displaymath}
\begin{split}
f_{31}= &X^{32} - 4X^{31} - 155X^{28} + 713X^{27} - 2480X^{26} + 9300X^{25} - 5921X^{24}\\
&+ 24707X^{23} + 127410X^{22} - 646195X^{21} + 747906X^{20} - 7527575X^{19} +\\
&4369791X^{18} - 28954961X^{17} - 40645681X^{16} + 66421685X^{15} - 448568729X^{14}\\
&+ 751001257X^{13} - 1820871490X^{12} + 2531110165X^{11} - 4120267319X^{10} +\\
&4554764528X^9 - 5462615927X^8 + 4607500922X^7 - 4062352344X^6 + 2380573824X^5\\
&- 1492309000X^4 + 521018178X^3 - 201167463X^2 + 20505628X - 1261963.
\end{split}
\end{displaymath}
Set $K=\mathbb{Q}[X]/(f_{31})$ and $\mathcal{O}_K$ the maximal order of $K$, then we can check that the discriminant of $\mathcal{O}_K$
is equal to $-31^{41}$, and the Galois group of the polynomial $f_{31}$ is isomorphic to $\textrm{PGL}_2(\mathbb{F}_{31})$.
These facts prove $f_{31}$ is the polynomial corresponding to the mod-31 projective representation associated to $\Delta(q)$.
An easy calculation shows that the first few primes satisfying  Serre's criteria as well as
$\tau(p)\equiv0\mod 11\cdot 13\cdot 17\cdot 19\cdot 31$ are
$$982149821766199295999,3748991773540147199999,$$
$$3825907566871689215999,3903375187595059199999.$$
So we proved Theorem \ref{theorem:main2}.

From the table above, we have $\tau(10^{1000}+1357)\equiv 15\mod19$. So the missing sign in the table of \cite{Edixhoven} is found.
Moreover, we have
$$\tau(10^{1000}+1357)\equiv\pm 18\mod 31.$$

The Magma code of our algorithm can be downloaded from the web at the address

\begin{center}\url{http://faculty.math.tsinghua.edu.cn/~lsyin/publication.htm}\end{center}

\section{acknowledgments}
Our interest in computing coefficients of modular forms is motivated by the wonderful courses given by Bas Edixhoven and Jean-Marc Couveignes at
Tsinghua University. Many thanks to them for their encouragement. The first author wishes  to thank Jean-Marc Couveignes for his
continuous assistance and many helpful suggestions. Thanks to Ye Tian for his helpful comments. Many thanks to Maarten Derickx
for fruitful discussions, helpful comments and suggestions. Thanks to Mark van Hoeij for providing us a plane model for $X_H(31)$ and
helping us reduce the polynomial $Q_{31}(X)$, these make the computation of level 31  a reality.

\end{document}